\newcommand{\doi}[1]{\url{http://dx.doi.org/#1}}
\newcommand{\rat}[2]{{\textstyle\frac{#1}{#2}}}
\newcommand{\p}{\partial}
\renewcommand{\phi}{\varphi}
\newcommand{\R}{{\mathbb R}}
\newcommand{\spde}{\textsc{spde}}
\newcommand{\sde}{\textsc{sde}}
\newcommand{\Ord}[1]{\ensuremath{\mathcal O\big(#1\big)}}
\newtheorem{theorem}{Theorem}
\newtheorem{lemma}[theorem]{Lemma}
\newtheorem{definition}[theorem]{Definition}
\theoremstyle{definition}
\newtheorem{remark}[theorem]{Remark}
\title{Macroscopic  discrete modelling of stochastic reaction-diffusion equations on a periodic domain}
\author{
W. Wang\thanks{School of Mathematics, University of Adelaide,
South Australia, \textsc{Australia}, \protect\url{mailto:
w.wang@adelaide.edu.au}; and Department of
Mathematics, Nanjing University, Nanjing, \textsc{China},
\protect\url{mailto:wangweinju@yahoo.com.cn}}
\and
A.~J. Roberts\thanks{School of Mathematics, University of Adelaide,
South Australia, \textsc{Australia}, \protect\url{mailto:
anthony.roberts@adelaide.edu.au}}
}
\date{\today}
\begin{document}

\maketitle

\begin{abstract}
Dynamical systems theory provides powerful methods to extract
effective macroscopic dynamics from complex systems with slow modes
and fast modes. Here we derive and theoretically support a
macroscopic, spatially discrete, model for a class of stochastic
reaction-diffusion partial differential equations with cubic nonlinearity. Dividing space
into overlapping finite elements, a special coupling condition between
neighbouring elements preserves the self-adjoint dynamics and
controls interelement interactions.  When the interelement coupling
parameter is small, an averaging method and an asymptotic expansion
of the slow modes show that the macroscopic discrete model will be
a family of coupled stochastic ordinary differential equations which
describe the evolution of the grid values. This modelling shows the
importance of subgrid scale interaction between noise and spatial diffusion
and provides a new rigourous approach to constructing semi-discrete
approximations to stochastic reaction-diffusion partial differential equations.
\end{abstract}


\section{Introduction}\label{sec:intro}
In modelling complex systems we often desire to model the effective macroscopic dynamics. Some cases can be extracted from the full, or microscopic, description by methods such as averaging, invariant manifold reduction and homogenization~\cite[e.g.]{GKS04, WD07-1}.

Stochastic partial differential equations (\textsc{spde}s) are widely studied in modeling, analyzing, simulating and predicting complex phenomena in many fields of nonlinear science~\cite[e.g.]{E00, Imkeller, Simulation, WaymireDuan}. Recently, macroscopic reduction for dissipative \textsc{spde}s, with two widely separated timescales, has been studied by the dynamical systems theory of stochastic invariant  manifolds~\cite[e.g.]{Rob03, Rob06, WD07} and by averaging methods~\cite[e.g.]{WR08, WR08-1}.  Moreover, invariant manifold theory also applies to generate a macroscopic discrete model of deterministic and stochastic dissipative \textsc{pde}s~\cite[e.g.]{Rob02, Rob07, Rob09}, the so-called holistic finite differences. Roberts~\cite{Rob08} recently extended the approach to ensure macroscopic discrete models preserve important self-adjoint properties of the fine scale dynamics for deterministic systems.

Here we address the macroscopic discrete modelling of dissipative \textsc{spde}s and develop a novel rigorous approach. We consider reaction-diffusion  in a one dimensional spatial domain driven by a noise which is white in time and with spatial structure. Let the non-dimensional spatial interval $I=[0, L]$ with length $L>0$, and let $L^2(I)$ be the Lebesgue space of square integrable real valued functions on $I$. Consider the following non-dimensional stochastic reaction-diffusion equation for a stochastic field~$u(x,t)$, of period~$L$ in space~$x$,
\begin{eqnarray}
\p_tu&=&\p_{xx} u+\alpha(u-u^3)+\sigma\,\p_tW
\quad \text{on } I, \label{e:sRDe}\\
u(0, t)&=&u(L, t)
\quad\text{and}\quad u_x(0, t)=u_x(L, t),
\label{e:sRDe-bd}
\end{eqnarray}
where $W(x, t)$ is an $L^2(I)$ valued $Q$-Wiener process defined on
a complete probability space $(\Omega, \mathcal{F}, \mathbb{P})$
which is detailed in the next section.

The spatial domain~$I$ is divided into $M$~elements and consequently a set of $M$~fields are defined, one on each of these elements. In order to preserve the self-adjoint property of the linear operator defined on the elements we impose special interelement coupling conditions with a strength parametrised by~$\gamma$.
The system dynamics are expanded in the interelement coupling~$\gamma$ so that based upon the case of weak coupling, that is, small $\gamma>0$\,,  an asymptotic approximation and an averaging method derive a family of coupled stochastic differential equations (\sde{}s) which describe the evolution of grid values; that is, the amplitude of the system on each element.  These \sde{}s are a discrete stochastic model of the continuous space stochastic system~\eqref{e:sRDe}--\eqref{e:sRDe-bd}.  Our discrete \sde\ model highlights the macroscopic influence of `subgrid' interactions between noise and spatial diffusion in the \spde.

The simplest conventional finite difference approximation of the \spde~\eqref{e:sRDe} on a regular grid in~$x$, say $X_j=jh$ for some constant grid spacing~$h$, is
\begin{equation}\label{e:diff-appr}
dU_j=\frac{1}{h^2}(U_{j+1}-2U_j+U_{j-1})\,dt+\alpha(
U_j-U_j^3)dt+\sigma\, dW_j
\end{equation}
where $U_j(t)$ is the grid value of the field~$u(x,t)$ at the grid
points~$X_j$, and similarly $W_j(t)=W(X_j,t)$.  However, our
analysis herein recommends that a more accurate closure incorporates
stochastic influences from the neighbour grids as
in the Ito system of \sde{}s
\begin{eqnarray}
dU_j&=&\frac{1}{h^2}(U_{j+1}-2U_j+U_{j-1})\,dt+(\hat{\alpha}
U_j-\alpha U_j^3)dt+\sigma\, dW_j\nonumber\\&&{}+ 3\sqrt{2}
U_jd\check{W}_j+\frac{\sigma}{4}\Big(dW_{j+1}-2dW_j+dW_{j-1}\Big)\,.\label{e:Uj}
\end{eqnarray}
The second terms in the last line of the above \sde\ system reflect
interaction between noise and spatial diffusion. Terms
in~$\check{W}_j$ and~$\hat{\alpha}$ are due to the microscopic,
subgrid scale, stochastic interactions discussed in
Sections~\ref{sec:macr-model}.


In order to generate a macroscopic discrete model we divide the domain into finite overlapping elements and choose special coupling boundary conditions, Section~\ref{sec:coulping BDC}. Such interelement coupling rules were first introduced by Roberts~\cite{Rob08} to construct spatially discrete models of deterministic dynamics. One important property of this interelement coupling is the preservation of self-adjoint symmetry in the underlying spatial dynamics. Moreover, the strength of the coupling is parametrised by~$\gamma$, $0\leq\gamma\leq 1$\,: when the coupling parameter~$\gamma$ is small, the coupling is weak and the system separates into `uninterestingly' decaying fast parts and the relevant slow parts, Section~\ref{sec:macr-model}. Then an averaging method~\cite{WR08}  derives a reduced model which describes the evolution of the overall amplitude of~\eqref{e:sRDe}--\eqref{e:sRDe-bd} on the whole domain. Further, an analysis on eigenfunctions obtains a reduced model describing the evolution of the local amplitude on each element. This model is the macroscopic discrete approximation to~\eqref{e:sRDe}--\eqref{e:sRDe-bd} expressed in~\eqref{e:Uj}.

In this approach one difficulty is to construct from the original spatio-temporal noise~$W(x,t)$ a Wiener process~$W^\gamma_j(x,t)$ on each element. A natural method is expanding $W(x,t)$ by the eigenfunctions of the linear operator~$\mathcal{L}_\gamma$ on each element, Section~\ref{sec:coulping BDC}, which has analogues in the method of finite elements~\cite{Thom97}. This construction shows that for small~$h$ and small coupling~$\gamma$, the stochastic force on the first mode of $\mathcal{L}_\gamma$ approximates the grid values~$W(X_j,t)$. Moreover, analysis of the eigensystem of $\mathcal{L}_\gamma$ for small~$\gamma$ shows the slow parts of the system dominate: the fast parts converge to quasi-equlibrium with rate~$1/h^2$. Then by this and the construction of~$W^\gamma(x,t)$, the macroscopic discrete reduced  model is proved to be consistent to the stochastic reaction-diffusion equations~\eqref{e:sRDe}--\eqref{e:sRDe-bd} as the element size $h\rightarrow 0$\,.

Another difficulty is that the linear operator~$\mathcal{L}_\gamma$\,, defined in~\eqref{e:L-gamma}, varies with the coupling parameter~$\gamma$. So the continuity of the linear oprator~$\mathcal{L}_\gamma$ in~$\gamma$ is needed. Section~\ref{sec:asy} argues that the graph convergence of~$\mathcal{L}_\gamma$ as $\gamma\rightarrow 0$ ensures the continuity of eigenfunctions and eigenvalues in~$\gamma$, then an asymptotic expansion of the first eigenfunction in~$\gamma$ shows that the grid value is the amplitude of the system on the element. Then by averaging and the expansion of the first eigenfunction of~$\mathcal{L}_\gamma$, the macroscopic reduced model is in the first eigenspace of~$\mathcal{L_\gamma}$, which is varying with respect to~$\gamma$. So last we project the reduced model to the first eigenspace of~$\mathcal{L}_0$, the basic mode, to derive the macroscopic model. In this approach one interesting phenomenon is that the effect of noise in the subgrid scale fast modes is transmitted into the macroscopic slow modes by the projection. Numerical simulations confirm such transmittal~\cite{Rob07}.


\section{Overlapping finite elements and coupling boundary conditions}\label{sec:coulping BDC}
This section divides the spatial domain~$I$ into $M$~overlapping elements with grid spacing~$h$. Let the $j$th~element
\begin{equation*}
I_j=\left[X_j-h, X_j+h\right]
\end{equation*}
with grid points $X_j=jh$ on each element~$I_j$, $j=1, 2, \ldots,
M$\,. Here for the periodic boundary condition we use the notation
$X_{j\pm M}=X_j$\,. Let~$u_j(x)$ denote the field on the
element~$I_j$, $j=1, 2, \ldots, M$\,. Denote by $f(u)=-u^3$\,. Then, modified from the \spde~\eqref{e:sRDe},
consider the following system of \spde{}s defined on elements~$I_j$, $j=1, 2,
\ldots, M$\,,
\begin{equation}\label{e:uj}
\p_tu^\gamma_j(x,t)=\p_{xx}
u^\gamma_j(x,t)+\alpha\gamma^2u^\gamma_j(x,t)+\alpha
f(u^\gamma_j(x,t))+\sigma\p_tW^\gamma_j(x,t) \quad \text{on } I_j,
\end{equation}
with the following interelement coupling conditions on the fields parametrised by~$\gamma$, and with $\gamma'+\gamma=1$\,,
\begin{equation}\label{cbd1}
u^\gamma_j(X_{j\pm 1}, t)=\gamma'u^\gamma_j(X_j, t)+\gamma
u^\gamma_{j\pm 1}(X_{j\pm 1},t), \quad j=1, 2, \ldots ,M\,,
\end{equation}
and coupling of the first spatial derivative, denoted by subscript~$x$,
\begin{eqnarray}
&&u^\gamma_{j,x}(X_j^-,t)-u^\gamma_{j,x}(X_j^+,t)+\gamma
u^\gamma_{j-1,
x}(X_j,t)-\gamma u^\gamma_{j+1,x}(X_j,t)\nonumber\\
&&\quad-\gamma'u^\gamma_{j,x}(X_{j-1},t)+\gamma'u^\gamma_{j,x}(X_{j+1},
t)=0\,, \quad j=1, 2,\ldots, M\,,\label{cbd2}
\end{eqnarray}
with, to account for $L$-periodicity of
solutions,
\begin{align}&\label{e:notation1}
u^\gamma_{j\pm M}(x\pm L,t)=u^\gamma_j(x, t)
\\&\label{e:notation2}
u^\gamma_{j\pm M, x}(x\pm L,t)=u^\gamma_{j, x}(x,t)\,.
\end{align}
The coupling parameter~$\gamma$ controls
the flow of information between the two adjacent elements: when the
coupling $\gamma=0$\,, adjacent elements are decoupled; when
$\gamma=1$\,, the system is full coupled
and~\eqref{e:uj}--\eqref{cbd2} is equivalent to the dynamics of the
physical stochastic reaction-diffusion
equation~\eqref{e:sRDe}--\eqref{e:sRDe-bd}, see
Section~\ref{sec:estimate}.

The noise fields~$W_j^\gamma(x,t)$ defined on each element~$I_j$, $j=1,\ldots, M$\,, are infinite dimensional Wiener process which are detailed later from~$W(x,t)$.

Related but different interelement coupling boundary conditions empowered an earlier exploration of the non-self-adjoint interaction between noise, nonlinear advection and spatial diffusion in discretely modelling the stochastic Burgers' equation~\cite{Rob07}.

For our purposes, first we introduce a mathematical framework for system~\eqref{e:uj}--\eqref{cbd2}. Let $H_j=L^2(I_j)$  be the set of all square integrable function on~$I_j$ and $V_1=H^1(I_j)$. Denote by $\mathcal{H}=\Pi_{j=1}^MH_j$\,, $\mathcal{V}=\Pi_{j=1}^MV_j$ and
\begin{equation*}
\mathcal{H}^\alpha=\Pi_{j=1}^MH_j^\alpha\,,\quad \alpha>0\,.
\end{equation*}
Here $H_j^\alpha$ denotes the usual Sobolev space~$W^{2,\alpha}(I_j)$~\cite{Sob}. Define the inner product~$\langle\cdot, \cdot \rangle$ on~$\mathcal{H}$ as the sum of the element integrals
\begin{eqnarray*}
\langle u, v
\rangle=\sum_{j=1}^M\left[\int_{X_{j-1}}^{X_j^-} u_j(x)v_j(x)\,dx+\int_{X_j^+}^{X_{j+1}} u_j(x)v_j(x)\,dx\right]
\end{eqnarray*}
for any $u,v\in \mathcal{H}$ with $u=(u_j)$ and $v=(v_j)$\,. And denote by~$\|\cdot\|_0$ the product $L^2$-norm on space~$\mathcal{H}$. For any $\alpha\in\mathbb{Z}^+$ denote the semi-norm on~$\mathcal{H}^\alpha$ as
\begin{equation*}
\|u\|^2_\alpha=\sum_{j=1}^M\|\p^\alpha_xu_j\|_0^2\,, \quad
u=(u_j)\in\mathcal{H}^\alpha\,.
\end{equation*}

For the system~\eqref{e:uj}--\eqref{cbd2} we introduce the family of functional spaces
\begin{eqnarray*}
\mathcal{H}_\gamma = \big\{(u_j)\in \mathcal{H}
&:& u_j(X_{j\pm 1},
t)=\gamma'u_j(X_j, t)+\gamma u_{j\pm 1}(X_{j\pm
1},t),
\\&&{} j=1, 2, \ldots ,M \,, \quad
\text{with $L$-periodicty }\eqref{e:notation1}\big\}
\end{eqnarray*}
and the subspaces
\begin{eqnarray*}
\mathcal{V}_\gamma&=&\big\{(u_j)\in \mathcal{V}\cap
\mathcal{H}_\gamma :
u^\gamma_{j,x}(X_j^-,t)-u^\gamma_{j,x}(X_j^+,t)+\gamma
u^\gamma_{j-1, x}(X_j,t)
\\&&\quad{}
-\gamma u^\gamma_{j+1,x}(X_j,t)
-\gamma'u^\gamma_{j,x}(X_{j-1},t)+\gamma'u^\gamma_{j,x}(X_{j+1},
t)=0\,,
\\&&\quad{} j=1, 2,\ldots, M\,, \quad \text{with
$L$-periodicty }\eqref{e:notation2}\big\}.
\end{eqnarray*}
Then define the second order differential operator
$\mathcal{L}_\gamma:D(\mathcal{L}_\gamma)\subset\mathcal{V}_\gamma\rightarrow
\mathcal{H}$ by
\begin{equation}\label{e:L-gamma}
\mathcal{L}_\gamma u=\left(\frac{\p^2 u_j}{\p x^2}\right) \quad
\text{for all } u=(u_j)\in D(\mathcal{L}_\gamma).
\end{equation}
By a basic calculation~\cite{Rob08}, $-\mathcal{L}_\gamma$ is a self-adjoint second order operator. A direct calculation yields that for any $u=(u_j)\in D(\mathcal{L}_\gamma)$
\begin{equation}\label{e:H1}
\langle -\mathcal{L}_\gamma u, u \rangle=
\sum_{j=1}^M\|u_j\|_1^2\geq 0
\end{equation}
which establishes the positivity of operator $-\mathcal{L}_\gamma$\,. Then there are coupling dependent eigenfunctions $\{(e_{j,k}^\gamma(x))\}_{k=0}^\infty$ which form a standard orthonormal system in space~$\mathcal{H}_\gamma$ and a sequence of real numbers $0<\lambda_0(\gamma)\leq \lambda_1(\gamma)\leq \cdots$ such that
\begin{equation}\label{e:eigen}
\mathcal{L}_\gamma(e_{j, k}^\gamma)= -\lambda_k(\gamma)(e_{j,
k}^\gamma),\quad k=0, 1, \ldots, \quad 0<\gamma\leq1\,.
\end{equation}
Moreover, $\mathcal{L}_\gamma$ is an infinitesimal generator of a
$C_0$~semigroup; denote this semigroup by
$\{\mathcal{S}_\gamma(t)\}_{t\geq 0}$\,. By the positivity
of~$-\mathcal{L}_\gamma$ we define $(-\mathcal{L}_\gamma)^{\alpha}$
for any exponent $\alpha>0$ by
 \begin{equation*}
(-\mathcal{L}_\gamma)^\alpha
u=\sum_k\big[\lambda_k(\gamma)\big]^\alpha (u_{j,k}e_{j,k}^\gamma)
\end{equation*}
for $u=(u_j)=(\sum_{k}u_{j,k}e^\gamma_{j,k})$\,. Denote by
$\mathcal{H}^\alpha_\gamma=D((-\mathcal{L}_\gamma)^\alpha)$ and
define the semi-norm~$\|\cdot\|_{\alpha, \gamma}$ in
space~$\mathcal{H}^\alpha_\gamma$ by
\begin{equation*}
\|u\|_{\alpha,
\gamma}=\|(-\mathcal{L}_\gamma)^{\alpha/2}u\|_0\,,\quad
u\in\mathcal{V}_\gamma\,.
\end{equation*}
By the same calculation as~\eqref{e:H1}, for $\alpha\in\mathbb{Z}^+\cup\{0\}$\,, \begin{equation}\label{e:alpha-norm}
\|u\|_{\alpha, \gamma}=\|u\|_\alpha\,,\quad
u\in\mathcal{H}^\alpha_\gamma\,.
\end{equation}

Given a complete probability space $(\Omega, \mathcal{F}, \{\mathcal{F}_t\}_{t\geq 0}, \mathbb{P})$, define the~$L^2(I)$ valued $Q$-Wiener process
\begin{equation*}
W(x,t)=\sum_{k=0}^\infty\sqrt{q}_k\beta_k(t)e_k(x)
\end{equation*}
where $\{\beta_k(t)\}_k$ are mutually independent standard Brownian motions and $\{e_k(x)\}_k$ is a standard basis of $L^2(I)$ with $e_0(x)=\sqrt{1/L}$ and for $k\geq 1$
\begin{eqnarray*}
e_k(x)=
  \begin{cases}
  \sqrt{\frac{2}{L}}\cos\frac{2m\pi x}{L}\,, & k=2m\,, \\
    \sqrt{\frac{2}{L}}\sin\frac{2m\pi x}{L}\,,&  k=2m-1\,.
\end{cases}
\end{eqnarray*}
Moreover, assume that the Wiener process is sufficiently well-behaved that
\begin{equation}\label{e:trace-assumption}
\sum_{k=0}^\infty k q_k<\infty\,.
\end{equation}

Now we define the $\mathcal{H}_\gamma$-valued Wiener process
$(W_j^\gamma(x, t))$ on all the overlapping elements in the
following series form
\begin{eqnarray}
W^\gamma_j(x,t)&=&
\gamma\sum_{l=0}^\infty\sqrt{q^h_{j,l}}\beta_{j,l}(t)e_{j,l}^\gamma(x),\label{e:Wj}
\end{eqnarray}
where $\{\beta_{j,l}\}_{k=0}^\infty$ are mutually independent
standard Brownian motions on $(\Omega, \mathcal{F},\mathbb{P})$ and
there are $q_{j,l}\in\R$ such that
\begin{equation*}
q_{j,l}^h=q_{j,l} h
\end{equation*}
Here for each element $j=1,\ldots, M$
\begin{equation*}
\sqrt{q^h_{j,l}}\beta_{j,l}(t)e_{j,l}^\gamma(x)=\langle W(x,t),
e_{j,l}^\gamma(x)\rangle e_{j,l}^\gamma(x)/\|e_{j,l}^\gamma\|_0\,.
\end{equation*}
Moreover, by the assumption~\eqref{e:trace-assumption}
\begin{equation}\label{e:Q-assumption}
\sum_{l=0}^\infty \lambda_l(\gamma)q_{j,l}^h<B<\infty \,,
\end{equation}
where the bound~$B$ is independent of the coupling parameter $\gamma\in (0, 1]$.
\begin{remark}\label{rem:W-gamma}
The definition of $W^\gamma(x,t)$ is similar to the definition of that in the finite element method~\cite{Thom97, Yan05, DuZhang02}.

\end{remark}

Now for fixed $h>0$ and for any $T>0$\,,
\begin{equation*}
W^\gamma(x, t)=(W^\gamma_j(x,t))\in C^{1/2}(0, T;
\mathcal{H}^2_\gamma).
\end{equation*}
and $\{W^\gamma(x,t)\}_{0<\gamma\leq 1}$ is compact in space $C(0, T; \mathcal{H}^\alpha_\gamma)$ for $\alpha<2$ for almost all $\omega\in\Omega$\,. Then for almost all $\omega\in\Omega$, the following limit
\begin{equation*}
\widetilde{W}(x,t)=(W_j(x,t))=\lim_{n\rightarrow\infty}
(W_j^{\gamma_n}(x,t))
\end{equation*}
is  well defined in space~$C(0, T; \mathcal{H})$ for some $\gamma_n\rightarrow 1$ as $n\rightarrow\infty$\,. Moreover, by the coupling conditions we have almost surely
\begin{equation*}
W_j(X_{j\pm 1}, t)=W_{j\pm 1}(X_{j\pm 1}, t), \quad j=1,2,\ldots,
M\,,
\end{equation*}
with notations $W_0(0,t)=W_{M}(X_M,t)$ and $W_{M+1}(X_{M+1},
t)=W_1(X_1, t)$\,.
\begin{remark}\label{rem:W-limit}
By the analysis on eigenfunctions $(e^\gamma_{j,k}(x))_k$ in
Section~\ref{sec:asy}, the above limit of~$W^\gamma(x,t)$ in space
$C(0, T; \mathcal{H})$ is unique in the sense of distribution for
any sequence $\gamma_n\rightarrow 1$. Further, the distribution of
$W_j(x,t)$~coincides with that of~$W_{j\pm 1}(x,t)$ on
the common overlapping domain.
\end{remark}

Finally, we explore the linear operator~$\mathcal{L}_\gamma$ as $\gamma\to 0$\,, denoted by~$\mathcal{L}_0$. Define
\begin{equation}\label{e:L-0}
\mathcal{L}_0u=\left(\frac{\p^2u_j}{\p x^2}\right),\quad
u=(u_j)\in D(\mathcal{L}_0),
\end{equation}
with the `insulating' version of the coupling conditions~\eqref{cbd1}--\eqref{cbd2}, $j=1,\ldots, M$\,,
\begin{eqnarray*}
&&u_j(X_{j\pm1})=u_j(X_j^-)=u_j(X_j^+),\\
&&u_{j,x}(X_j^+)-u_{j,x}(X_j^-)+u_{j,x}(X_{j-1})-u_{j,x}(X_{j+1})=0\,.
\end{eqnarray*}
By a standard computation~\cite{Rob08}, the spectrum of~$-\mathcal{L}_0$\,, $\{\lambda_k\}_{k=0}^\infty$\,, is
\begin{equation*}
\{0, \,\pi^2/h^2, \,4\pi^2/h^2(\text{triple}), \,9\pi^2/h^2,
\,16\pi^2/h^2(\text{triple}), \,\ldots\,, k^2\pi^2/h^2\,,
\ldots\}.
\end{equation*}
Denote the corresponding orthonormal standard eigenmodes on each
element by $\{e_{j,k}\}_{k=0}^\infty$\,, $j=1, \ldots, M$\,, then
$e_{j,0}(x)=1/\sqrt{2h}$ and $\{{e}_{j,k}(x)\}_{k\geq 1}$ are
\begin{eqnarray*}
&&\left\{ \rat{1}{\sqrt h}\sin\rat{\pi(x-X_j)}{h},\,
\left\{\rat1{\sqrt h}\cos\rat{2\pi(x-X_j)}{h}, \rat1{\sqrt
h}\sin\rat{2\pi(x-X_j)}{h}, \rat1{\sqrt
h}\sin\rat{2\pi|x-X_j|}{h}\right\}, \right.\\&&\left.\quad
\rat1{\sqrt h}\sin\rat{3\pi(x-X_j)}{h},\left\{\rat1{\sqrt h}\cos
\rat{4\pi(x-X_j)}{h}, \rat1{\sqrt h}\sin \rat{4\pi(x-X_j)}{h},
\rat1{\sqrt h}\sin\rat{4\pi|x-X_j|}{h}\right\},
\right.\\&&\quad\left. \ldots\right\}.
\end{eqnarray*}

\section{Limit system for full coupling}\label{sec:estimate}
Now we show that for full coupling, that is, as $\gamma\rightarrow
1$\,, equations~\eqref{e:uj}--\eqref{cbd2} generates a model for the
dynamics of the original physical stochastic reaction-diffusion
equation~\eqref{e:sRDe}--\eqref{e:sRDe-bd}.

This is followed by a discussion similar to the case of Dirichlet
boundary conditions~\cite{WR09}, here we just state the result and
omit the detailed proof.

\begin{theorem}
Assume bound~\eqref{e:Q-assumption} and
$u^\gamma(0)\in\mathcal{H}^2_\gamma$ with $\|u^\gamma(0)\|_2\leq
C_0$ which is independent of $\gamma$ and $u^\gamma(0)\rightarrow
u^0$ in $\mathcal{H}$ as~$\gamma\rightarrow 1$\,. Then for
any~$T>0$\,,~$u^\gamma$ converges to $u$ in distribution as
$\gamma\rightarrow 1$ in space $C(0, T; \mathcal{H})$
where~$u=(u_j)$ solves
\begin{equation}\label{e:gamma-1}
\p_t u_j=\p_{xx}u_j+\alpha u_j^3+\alpha f(u_j)+\sigma\p_tW_j\quad
\text{on}\quad I_j
\end{equation}
with  $u(0)=u^0$ and
\begin{equation}\label{e:gamma-1bc}
u_j(X_{j\pm1}, t)=u_{j\pm 1}(X_{j\pm 1}, t)\,, \quad
u_{j}(x,t)=u_{j\pm M}(x,t)\,.
\end{equation}
\end{theorem}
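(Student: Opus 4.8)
The plan is a compactness-and-identification argument of the kind standard for convergence of stochastic evolution equations, adapted to cope with the fact that the spaces $\mathcal{H}_\gamma,\mathcal{V}_\gamma$, the operator $\mathcal{L}_\gamma$, its semigroup $\mathcal{S}_\gamma$ and the forcing $W^\gamma$ all move with the coupling parameter. First I would establish a priori bounds uniform in $\gamma\in(0,1]$. Testing \eqref{e:uj} with $u^\gamma_j$, summing over $j$, and using \eqref{e:H1} together with the dissipativity $\langle\alpha f(u),u\rangle=-\alpha\|u\|_{L^4}^4\le0$ and the noise bound \eqref{e:Q-assumption}, an It\^o-formula plus Gronwall argument yields
\[
\EX\sup_{0\le t\le T}\|u^\gamma(t)\|_0^2+\EX\int_0^T\|u^\gamma(t)\|_1^2\,dt\le C(T,C_0,B),
\]
with $C$ independent of $\gamma$. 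Starting from $u^\gamma(0)\in\mathcal{H}^2_\gamma$ with $\|u^\gamma(0)\|_2\le C_0$, the mild formulation with $\{\mathcal{S}_\gamma(t)\}$, the smoothing estimate $\|(-\mathcal{L}_\gamma)^{1/2}\mathcal{S}_\gamma(t)\|\le Ct^{-1/2}$, the bound \eqref{e:Q-assumption} on $\sum_l\lambda_l(\gamma)q^h_{j,l}$, and the norm identity \eqref{e:alpha-norm} give a uniform-in-$\gamma$ bound on $\EX\sup_{t\le T}\|u^\gamma(t)\|_2^2$; a companion fractional-in-time estimate shows $u^\gamma$ is bounded in $C^\delta(0,T;\mathcal{H})$ in probability for some $\delta>0$.

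Second, tightness: since $h$ is fixed the embedding $\mathcal{H}^2\hookrightarrow\mathcal{H}$ is compact, so the uniform $\mathcal{H}^2$- and $C^\delta$-bounds, via an Arzel\`a--Ascoli/Aubin--Lions argument, make the laws of $(u^\gamma,W^\gamma)$ tight in $C(0,T;\mathcal{H})\times C(0,T;\mathcal{H})$, recalling that $W^\gamma\to\widetilde W$ in $C(0,T;\mathcal{H})$ is already available. By Prokhorov and the Skorokhod representation theorem, along a sequence $\gamma_n\to1$ there is a new probability space carrying copies, still denoted $(u^{\gamma_n},W^{\gamma_n})$, with $u^{\gamma_n}\to\bar u$ and $W^{\gamma_n}\to W=(W_j)$ almost surely in $C(0,T;\mathcal{H})$.

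Third, identify the limit. Because $\gamma'=1-\gamma\to0$, the interelement condition \eqref{cbd1} can be written $u^\gamma_j(X_{j\pm1})-u^\gamma_{j\pm1}(X_{j\pm1})=\gamma'\big(u^\gamma_j(X_j)-u^\gamma_j(X_{j\pm1})\big)$, and the uniform $\mathcal{H}^2$-bound makes the trace maps continuous on the relevant subspace, so letting $n\to\infty$ gives the matching condition \eqref{e:gamma-1bc}; likewise the derivative coupling \eqref{cbd2} degenerates. In the weak form of \eqref{e:uj} the linear term passes to the limit since $\alpha\gamma^2\to\alpha$ and $u^{\gamma_n}\to\bar u$ in $\mathcal{H}$; the cubic term converges because in one space dimension $H^1\hookrightarrow L^\infty$, so the uniform $\mathcal{H}^1$-bound upgrades a.e.\ convergence to convergence of $f(u^{\gamma_n})$ in $L^2(0,T;\mathcal{H})$; and the stochastic term converges by a martingale/representation characterisation together with $W^{\gamma_n}\to W$ and the $\gamma$-uniform bound \eqref{e:Q-assumption}. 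Hence $\bar u$ solves \eqref{e:gamma-1}--\eqref{e:gamma-1bc} driven by $W$. Finally, pathwise uniqueness for the limit system---standard for a semilinear equation with locally Lipschitz, one-sided dissipative cubic nonlinearity and additive trace-class noise---together with the Gy\"ongy--Krylov lemma (or Yamada--Watanabe) upgrades subsequential convergence to convergence in distribution of the whole family $u^\gamma$ as $\gamma\to1$.

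I expect the main obstacle to be the $\gamma$-dependence of everything at once: obtaining the $\mathcal{H}^2$ a priori bound uniformly in $\gamma$ while $D(\mathcal{L}_\gamma)$ and the coupling conditions themselves change, and passing to the limit simultaneously in the trace conditions \eqref{cbd1}--\eqref{cbd2} and in the $\gamma$-indexed forcing $W^\gamma$. The graph convergence of $\mathcal{L}_\gamma$ noted after \eqref{e:L-0}, the norm identity \eqref{e:alpha-norm}, and the $\gamma$-uniform trace-class bound \eqref{e:Q-assumption} are precisely the tools that make these passages work; beyond that, the argument parallels the Dirichlet-boundary case of~\cite{WR09}, which is why only the statement is recorded here.
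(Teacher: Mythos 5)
Your proposal follows essentially the same route as the paper: uniform-in-$\gamma$ energy and $\mathcal{H}^2$ estimates, tightness in $C(0,T;\mathcal{H})$ via a compactness lemma of Aubin--Lions/Simon type, extraction of a convergent subsequence, and identification of the limit through the weak formulation, where the interelement boundary terms carry a factor $\gamma'=1-\gamma$ and vanish thanks to the trace bound coming from the $\mathcal{H}^2$ estimate. The only cosmetic differences are that the paper subtracts the stochastic convolution $z^\gamma$ before the pathwise energy estimate and settles for weak convergence of the cubic term, while you invoke Skorokhod representation, strong convergence via $H^1\hookrightarrow L^\infty$, and an explicit Gy\"ongy--Krylov uniqueness step that the paper leaves implicit.
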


To prove the above result needs some  energy estimates on the
solutions $u^\gamma(x,t)$.
 By the definition
of~$\mathcal{L}_\gamma$, the \spde{}s~\eqref{e:uj}--\eqref{cbd2}
takes the following abstract from
\begin{align} &
du^\gamma(t)=[\mathcal{L}_\gamma u^\gamma(t)+\alpha\gamma^2
u^\gamma(t)+\alpha F(u^\gamma(t))]\,dt+\sigma dW^\gamma(t)\,,
\nonumber\\& u^\gamma(0)=(u_j^\gamma(0))\,, \label{e:abs-u-gamma}
\end{align}
where $u^\gamma(t)=(u_j^\gamma(t))$ and
$F(u^\gamma)=(f(u_j^\gamma))$. Writing the temporal dependence
explicitly, in a mild sense we have
\begin{align*}
u^\gamma(t)={}& \mathcal{S}_\gamma(t)u^\gamma(0)+
\alpha\int_0^t\mathcal{S}_\gamma(t-s)\left[\gamma^2u^\gamma(s)+F(u^\gamma(s))\right]ds
\\&{}
+\sigma\int_0^t\mathcal{S}_\gamma(t-s) \, dW^\gamma(s)\,.
\end{align*}
Then by a standard semigroup approach~\cite{PZ92}, for any
$u^\gamma(0)\in\mathcal{H}_\gamma$ and any $T>0$\,,
\eqref{e:abs-u-gamma}~has a unique mild solution $u^\gamma(t)\in
C(0, T; \mathcal{H}_\gamma)\cap L^2(0, T; \mathcal{V}_\gamma)$\,.
Letting
\begin{equation*}
z^\gamma(t)=\sigma\int_0^t\mathcal{S}_\gamma(t-s) \, dW^\gamma(s)\,,
\end{equation*}
we have the following lemma.
\begin{lemma}
Assume boundedness~\eqref{e:Q-assumption}. For any $T>0$ and $q>0$
there is a positive constant~$C_q(T)$ such that
\begin{equation*}
\mathbb{E}\sup_{0\leq t\leq T}\|z^\gamma(t)\|_{2, \gamma}^q\leq
C_q(T)\,.
\end{equation*}
\end{lemma}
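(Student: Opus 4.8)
The plan is to establish the bound on the stochastic convolution $z^\gamma(t)=\sigma\int_0^t\mathcal{S}_\gamma(t-s)\,dW^\gamma(s)$ by the classical factorization method of Da Prato and Zabczyk, while keeping careful track of the $\gamma$-dependence so that the constant $C_q(T)$ is uniform in $\gamma\in(0,1]$. First I would fix $\alpha\in(0,1)$ small and write, using the factorization identity,
\begin{equation*}
z^\gamma(t)=\frac{\sin(\pi\alpha)}{\pi}\int_0^t(t-s)^{\alpha-1}\mathcal{S}_\gamma(t-s)Y^\gamma(s)\,ds\,,
\qquad
Y^\gamma(s)=\sigma\int_0^s(s-r)^{-\alpha}\mathcal{S}_\gamma(s-r)\,dW^\gamma(r)\,.
\end{equation*}
An application of Hölder's inequality in $s$ then gives, for $q>1/\alpha$, a bound of the form $\sup_{t\le T}\|z^\gamma(t)\|_{2,\gamma}^q \le C(\alpha,q,T)\int_0^T\|(-\mathcal{L}_\gamma)Y^\gamma(s)\|_0^q\,ds$, where I have used $\|\mathcal{S}_\gamma(r)\|_{2,\gamma}\le \|(-\mathcal{L}_\gamma)\mathcal{S}_\gamma(r)\|_0$ together with the analyticity estimate $\|(-\mathcal{L}_\gamma)^{1-\alpha}\mathcal{S}_\gamma(r)\|\lesssim r^{-(1-\alpha)}$; crucially, by~\eqref{e:alpha-norm} the graph norm of $(-\mathcal{L}_\gamma)$ coincides with $\|\cdot\|_2$, so all these operator bounds are the usual heat-semigroup bounds and are $\gamma$-independent.

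Next I would take expectations and use the Burkholder--Davis--Gundy inequality (valid because $Y^\gamma$ is a Gaussian stochastic integral, so all moments are comparable to the second moment) to reduce the problem to estimating
\begin{equation*}
\mathbb{E}\|(-\mathcal{L}_\gamma)Y^\gamma(s)\|_0^q \lesssim_q
\left(\int_0^s(s-r)^{-2\alpha}\big\|(-\mathcal{L}_\gamma)\mathcal{S}_\gamma(s-r)\,\sigma\, Q_\gamma^{1/2}\big\|_{\mathrm{HS}}^2\,dr\right)^{q/2}\,,
\end{equation*}
where $Q_\gamma$ is the covariance of $W^\gamma$. Expanding the Hilbert--Schmidt norm in the eigenbasis $(e^\gamma_{j,l})$ and using $\|(-\mathcal{L}_\gamma)\mathcal{S}_\gamma(r)e^\gamma_{j,l}\|_0 = \lambda_l(\gamma)e^{-\lambda_l(\gamma)r}$ together with $q^h_{j,l}$ as the eigenvalues of $Q_\gamma$, the $r$-integral becomes $\sum_{j,l}q^h_{j,l}\lambda_l(\gamma)^2\int_0^s(s-r)^{-2\alpha}e^{-2\lambda_l(\gamma)(s-r)}\,dr \lesssim \sum_{j,l}q^h_{j,l}\lambda_l(\gamma)^{1+2\alpha}\Gamma(1-2\alpha)/2^{1-2\alpha}$ provided $\alpha<1/2$. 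For this to be finite and uniformly bounded I would split off the $l=0$ mode (on which $\lambda_0(\gamma)$ is bounded since $\lambda_0(\gamma)\to 0$) and, for $l\ge1$, bound $\lambda_l(\gamma)^{1+2\alpha}\le \lambda_l(\gamma)\cdot\lambda_l(\gamma)^{2\alpha}$ — but $\lambda_l(\gamma)\sim l^2\pi^2/h^2$ grows, so I instead choose $\alpha$ so small that $\sum_l q^h_{j,l}\lambda_l(\gamma)^{1+2\alpha}$ is controlled by the trace-type assumption~\eqref{e:Q-assumption}, $\sum_l\lambda_l(\gamma)q^h_{j,l}<B$, combined with the summability~\eqref{e:trace-assumption}; concretely one interpolates $\lambda_l(\gamma)^{2\alpha}$ against $(l q_l h)^{2\alpha}$-type terms so that the exponent budget from $\sum_k kq_k<\infty$ closes.

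The main obstacle, and the step deserving the most care, is precisely this last one: obtaining a bound on $\sum_{j,l}q^h_{j,l}\lambda_l(\gamma)^{1+2\alpha}$ that is \emph{uniform in $\gamma$}. The difficulty is that $\lambda_l(\gamma)$ varies with $\gamma$ and the eigenfunctions $e^\gamma_{j,l}$ — hence the projected coefficients $q^h_{j,l}=\langle QW\text{-data}\rangle$ — also vary with $\gamma$; one cannot simply quote a fixed-$Q$ estimate. I would handle this by invoking the construction of $W^\gamma$ in Section~\ref{sec:coulping BDC}: since $\sqrt{q^h_{j,l}}\beta_{j,l}e^\gamma_{j,l}=\langle W,e^\gamma_{j,l}\rangle e^\gamma_{j,l}/\|e^\gamma_{j,l}\|_0$ is an honest orthogonal projection of the fixed process $W$, Parseval gives $\sum_l q^h_{j,l}\le \gamma^{-2}\|W\|^2$-type control, and more importantly the hypothesis~\eqref{e:Q-assumption} is stated to hold with $B$ independent of $\gamma$. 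So the real content is to reduce the $\lambda_l(\gamma)^{1+2\alpha}$ sum to~\eqref{e:Q-assumption} plus a $\gamma$-independent multiplicative constant; this follows by writing $\lambda_l(\gamma)^{1+2\alpha}q^h_{j,l}=\big(\lambda_l(\gamma)q^h_{j,l}\big)\cdot\lambda_l(\gamma)^{2\alpha}$ and observing that the spectral asymptotics of $\mathcal{L}_\gamma$ (established in Section~\ref{sec:asy}, or simply $\lambda_l(\gamma)\le C l^2/h^2$ uniformly in $\gamma$ by a min-max comparison of the quadratic form~\eqref{e:H1} against $\gamma=0$) turn $\lambda_l(\gamma)^{2\alpha}$ into an $l^{4\alpha}$ factor absorbable into the $\sum_k k q_k<\infty$ budget once $\alpha<1/4$. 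Once the $q$-th moment of $\|(-\mathcal{L}_\gamma)Y^\gamma(s)\|_0$ is bounded uniformly in $s\in[0,T]$ and $\gamma$, integrating in $s$ and feeding back through the factorization inequality yields the claimed $C_q(T)$, completing the proof.
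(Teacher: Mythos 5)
Your proposal is essentially the paper's own proof: the paper disposes of this lemma in one line by noting that $-\mathcal{L}_\gamma$ is positive and self-adjoint, so $\|\mathcal{S}_\gamma(t)\|_{\mathcal{L}}\leq 1$ uniformly in $\gamma$, and then invoking the stochastic factorization formula of Da Prato and Zabczyk — precisely the route you take, only you supply the details (H\"older, Gaussian moment equivalence, the Hilbert--Schmidt computation in the eigenbasis, and the $\gamma$-uniform control of $\sum_l\lambda_l(\gamma)^{1+2\alpha}q^h_{j,l}$ via~\eqref{e:Q-assumption} and~\eqref{e:trace-assumption}) that the paper leaves implicit. Your explicit tracking of the $\gamma$-independence of the spectral sums is exactly the content hidden in the paper's citation, so the two arguments coincide.
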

\begin{proof}
Since $-\mathcal{L}_\gamma$ is positive and self-adjoint,
\begin{equation*}
\|\mathcal{S}_\gamma(t)\|_{\mathcal{L}}\leq 1\,, \quad t\geq 0\,.
\end{equation*}
Then the result follows from the stochastic factorization
formula~\cite{PZ92}.
\end{proof}

Now define the difference $w^\gamma=u^\gamma-z^\gamma$\,, then
\begin{equation*}
dw^\gamma(t)=[\mathcal{L}_\gamma
w^\gamma(t)+\alpha\gamma^2w^\gamma(t)+\alpha F(u^\gamma)]dt\,,\quad
w^\gamma(0)=u^\gamma(0)\,.
\end{equation*}
By the standard energy estimate  to stochastic reaction-diffusion
equations with more general nonlinearity~\cite{WR08} we have
\begin{lemma}\label{lem:est}
Assume $u^\gamma(0)\in\mathcal{H}_\gamma$\,,
 then for any $T>0$\,, there is a positive constant $C_T>0$ such that for any
$p\in \mathbb{Z}^+$
\begin{equation*}
\mathbb{E}\|u^\gamma\|_{C(0,T; \mathcal{H}_\gamma)\cap L^p(0, T;
\mathcal{V}_\gamma)}+\mathbb{E}\|\p_tu^\gamma\|_{L^2(0, T;
\mathcal{H}^{-1})}\leq C_T(1+\|u^\gamma(0)\|_0^2)\,.
\end{equation*}
\end{lemma}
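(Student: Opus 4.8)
The plan is to carry out the standard parabolic energy estimate for dissipative stochastic reaction--diffusion equations, as in~\cite{WR08}, but pathwise on the random equation for the shifted field $w^\gamma=u^\gamma-z^\gamma$ and then to take expectations with the help of the preceding lemma. The essential structural input is that the coupling conditions~\eqref{cbd1}--\eqref{cbd2} make the quadratic form~\eqref{e:H1} free of interelement boundary contributions, so that testing against~$\mathcal{L}_\gamma$ produces exactly $-\|w^\gamma\|_1^2$, just as on a single periodic interval; combined with the $\gamma$-independence of the Sobolev norms~\eqref{e:alpha-norm} and the $\gamma$-uniform trace bound~\eqref{e:Q-assumption}, this is what keeps the final constant $C_T$ independent of the coupling parameter.

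First I would test the $w^\gamma$-equation with $w^\gamma$ --- rigorously through a Galerkin truncation in the eigenbasis $(e^\gamma_{j,k})$, or via the Lions--Magenes chain rule since $w^\gamma\in C(0,T;\mathcal{H}_\gamma)\cap L^2(0,T;\mathcal{V}_\gamma)$ with $\p_tw^\gamma\in L^2(0,T;\mathcal{H}^{-1})$ --- to obtain
\begin{equation*}
\tfrac12\tfrac{d}{dt}\|w^\gamma\|_0^2+\|w^\gamma\|_1^2
=\alpha\gamma^2\|w^\gamma\|_0^2+\alpha\langle F(w^\gamma+z^\gamma),w^\gamma\rangle.
\end{equation*}
Since $f(u)=-u^3$ acts elementwise, the nonlinear term splits as $-\|w^\gamma\|_{L^4}^4$ (the dissipative gain) plus the three cross terms in $z^\gamma$, namely $-3\langle(w^\gamma)^2z^\gamma,w^\gamma\rangle$, $-3\langle w^\gamma(z^\gamma)^2,w^\gamma\rangle$ and $-\langle(z^\gamma)^3,w^\gamma\rangle$; each of these is absorbed by H\"older's and Young's inequalities into a small fraction of $\|w^\gamma\|_{L^4}^4$ plus a constant times $(1+\|z^\gamma\|_{L^4}^4)(1+\|w^\gamma\|_0^2)$. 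Using $\gamma^2\le1$ and the one-dimensional Sobolev embedding $H^1(I_j)\hookrightarrow L^4(I_j)$ --- so that the bound on $\|z^\gamma\|_{2,\gamma}=\|z^\gamma\|_2$, together with the companion $L^2$-bound on $z^\gamma$ obtained from the same stochastic factorization argument, controls $\|z^\gamma\|_{L^4}$ --- Gr\"onwall's inequality yields the pathwise estimate
\begin{equation*}
\sup_{0\le t\le T}\|w^\gamma(t)\|_0^2+\int_0^T\|w^\gamma\|_1^2\,dt+\int_0^T\|w^\gamma\|_{L^4}^4\,dt
\le C_T\bigl(1+\|u^\gamma(0)\|_0^2\bigr)\Bigl(1+\sup_{0\le t\le T}\|z^\gamma(t)\|_2^{4}\Bigr).
\end{equation*}

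Next I would take expectations, invoke the preceding lemma (with $q=4$, and with larger $q$ to cover every index $p$ appearing in the statement), and return from $w^\gamma$ to $u^\gamma=w^\gamma+z^\gamma$, which costs only the already-controlled norms of $z^\gamma$. For the $\p_tu^\gamma$ contribution I would read off $\p_tw^\gamma=\mathcal{L}_\gamma w^\gamma+\alpha\gamma^2w^\gamma+\alpha F(u^\gamma)$ and bound each piece in $\mathcal{H}^{-1}$: $\|\mathcal{L}_\gamma w^\gamma\|_{\mathcal{H}^{-1}}\lesssim\|w^\gamma\|_1$ by duality against the form~\eqref{e:H1}, $\|\gamma^2w^\gamma\|_{\mathcal{H}^{-1}}\le\|w^\gamma\|_0$, and $\|F(u^\gamma)\|_{\mathcal{H}^{-1}}\lesssim\|u^\gamma\|_{L^4}^3$, whose square is integrable on $[0,T]$ by the one-dimensional interpolation $\|u^\gamma\|_{L^4}\lesssim\|u^\gamma\|_0^{3/4}\|u^\gamma\|_1^{1/4}$ together with H\"older in time; the contribution of $\p_tz^\gamma$ is controlled likewise. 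Summing the pieces and taking expectations gives the stated bound.

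I expect the main obstacle to be the bookkeeping of the cubic cross terms so that every constant is genuinely independent of $\gamma$ --- equivalently, uniform as the fast decay rate of $\mathcal{L}_\gamma$ becomes large --- which is precisely the place where one must exploit that~\eqref{e:H1} and~\eqref{e:alpha-norm} hold with $\gamma$-independent constants and that the bound $B$ in~\eqref{e:Q-assumption} does not depend on $\gamma$; once the absorbing structure of $-u^3$ is combined correctly with these uniform inputs, everything else is the routine reaction--diffusion energy computation.
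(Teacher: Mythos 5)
Your proposal follows exactly the route the paper intends: the paper itself gives no detailed proof of this lemma, merely introducing $z^\gamma$, setting $w^\gamma=u^\gamma-z^\gamma$, writing the random evolution equation for $w^\gamma$, and citing the standard energy estimate of~\cite{WR08}; your testing of the $w^\gamma$-equation against $w^\gamma$, the absorption of the cubic cross terms via H\"older--Young into the $\|w^\gamma\|_{L^4}^4$ dissipation, Gr\"onwall, the moment bounds on $z^\gamma$ from the preceding lemma, and the duality bound for $\p_t u^\gamma$ are precisely the omitted details. The one caveat is the $L^p(0,T;\mathcal{V}_\gamma)$ clause for arbitrary $p\in\mathbb{Z}^+$: the basic energy identity only delivers $L^2(0,T;\mathcal{V}_\gamma)$ (plus $L^4$ in space--time from the cubic term), and taking higher moments $q$ of $z^\gamma$ does not by itself upgrade the time-integrability of $\|w^\gamma\|_1$, but this imprecision lies in the statement as the paper words it rather than in your argument.
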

We show that $\{\mathcal{D}(u^\gamma)\}_\gamma$, the distribution
of~$u^\gamma$ in space $C(0,T; \mathcal{H})$, is tight. For this we
need the following lemma by Simon~\cite{Sim87}.
\begin{lemma}\label{lem:compemb}
Assume~$E$, $E_0$ and~$E_1$ are Banach spaces such that $E_1\Subset
E_0$\,, the interpolation space $(E_0, E_1)_{\theta,1}\subset E$
with $\theta\in (0, 1)$  and $E\subset E_0$ with
$\subset$~and~$\Subset$ denoting continuous and compact embedding
respectively. Suppose $p_0,p_1\in [1,\infty]$ and $T>0$\,, such that
\begin{equation*}
X  \text{ is a bounded set in } L^{p_1}(0, T; E_1)
\end{equation*}
and
\begin{equation*}
\p X:=\{\p v: v\in X\} \text{ is a bounded set in } L^{p_0}(0, T;
E_0).
\end{equation*}
Here $\p$ denotes the distributional derivative. If
$1-\theta>1/p_\theta$ with
 \begin{equation*}
\frac{1}{p_\theta}=\frac{1-\theta}{p_0}+\frac{\theta}{p_1}\,,
 \end{equation*}
then $X$~is relatively compact in $C(0, T; E)$.
\end{lemma}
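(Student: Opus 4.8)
The plan is to recognise this as the Aubin--Lions--Simon compactness lemma and to prove it by combining a vector-valued trace estimate with the Arzel\`a--Ascoli theorem, the hypothesis $1-\theta>1/p_\theta$ entering at the single point where a \emph{compact} embedding of an intermediate space into~$E$ is needed. Write $R_1$ and $R_0$ for the bounds of~$X$ in $L^{p_1}(0,T;E_1)$ and of $\p X$ in $L^{p_0}(0,T;E_0)$, put $1/p_0'=1-1/p_0$, and set $\sigma:=\frac{1/p_1}{1/p_1+1/p_0'}\in(0,1)$ and $G:=(E_1,E_0)_{\sigma,\infty}$. First I would record two cheap facts. Since $\p X$ is bounded in $L^{p_0}(0,T;E_0)$, for $v\in X$ and $s\le t$ the identity $v(t)-v(s)=\int_s^t\p_\tau v\,d\tau$ with H\"older's inequality gives $\|v(t)-v(s)\|_{E_0}\le|t-s|^{1/p_0'}R_0$; hence $X$ is equi-H\"older, so uniformly equicontinuous, into~$E_0$, and --- evaluating near a time where $\|v(\cdot)\|_{E_1}$ is small (such a time exists since $\int_0^T\|v(t)\|_{E_1}^{p_1}\,dt\le R_1^{p_1}$) and using $E_1\hookrightarrow E_0$ --- the pointwise values $\{v(t):v\in X,\ t\in[0,T]\}$ are bounded in~$E_0$.

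The substantial step is a \emph{trace estimate} for the $K$-functional $K(\tau,\cdot;E_1,E_0)$. For $v\in X$, $t\in[0,T]$ and $\rho>0$, write $v(t)=\bar v_\rho(t)+(v(t)-\bar v_\rho(t))$, where $\bar v_\rho(t)$ is the mean of~$v$ over an interval of length~$\rho$ adjacent to~$t$ (reflecting the interval near the endpoints). Then $\|\bar v_\rho(t)\|_{E_1}\le\rho^{-1/p_1}R_1$ and $\|v(t)-\bar v_\rho(t)\|_{E_0}\le\rho^{1/p_0'}R_0$, so $K(\tau,v(t))\le\rho^{-1/p_1}R_1+\tau\rho^{1/p_0'}R_0$; optimising in~$\rho$, and using $K(\tau,v(t))\le\tau\|v(t)\|_{E_0}$ for small~$\tau$, gives $K(\tau,v(t))\le C\tau^{\sigma}$ with $C$ depending only on $T,R_0,R_1$. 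Thus the pointwise values $\{v(t):v\in X\}$ are bounded in $G=(E_1,E_0)_{\sigma,\infty}$, uniformly in~$t$. A short computation shows that $1-\theta>1/p_\theta$ is \emph{equivalent} to $\sigma<1-\theta$. Since $E_1\Subset E_0$, a compact embedding interpolates along the real scale $(E_1,E_0)_{\cdot,\cdot}$: a set bounded in~$G$ is precompact in~$E_0$ (split via the $K$-functional at large scale and use $E_1\Subset E_0$), and then $K(\tau,\cdot)\le\min(\tau\|\cdot\|_{E_0},\tau^{\sigma}\|\cdot\|_{G})$ makes it Cauchy, hence precompact, in $(E_1,E_0)_{\eta,1}$ for every $\eta\in(\sigma,1)$. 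Taking $\eta=1-\theta$ and using $(E_1,E_0)_{1-\theta,1}=(E_0,E_1)_{\theta,1}\subset E$ gives $G\Subset E$, so in particular $\{v(t):v\in X\}$ is relatively compact in~$E$ for each fixed~$t$.

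It remains to upgrade equicontinuity from~$E_0$ to~$E$, which I would do by interpolating between the two estimates already in hand, $\|v(t)-v(s)\|_{E_0}\le|t-s|^{1/p_0'}R_0$ and the uniform bound $\|v(t)-v(s)\|_{G}\le2C$: splitting the integral defining $\|v(t)-v(s)\|_{(E_1,E_0)_{1-\theta,1}}$ (which dominates $\|v(t)-v(s)\|_E$ since $(E_0,E_1)_{\theta,1}\subset E$) at a level $\tau_0$ chosen as a suitable power of $|t-s|$ gives $\|v(t)-v(s)\|_E\le C'|t-s|^{\kappa}$ with $\kappa=\frac{1-\sigma-\theta}{(1-\sigma)p_0'}>0$, uniformly over $v\in X$. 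The Arzel\`a--Ascoli criterion in $C(0,T;E)$ --- pointwise relative compactness in~$E$ together with uniform equicontinuity --- then yields that $X$ is relatively compact in $C(0,T;E)$.

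The hard part is the trace estimate together with the compact-embedding statement: the real content of the lemma is that the \emph{mixed} time-integrability (order~$p_1$ in the smoother space~$E_1$, order~$p_0$ for the time derivative in the rougher space~$E_0$) forces, pointwise in time, membership in an intermediate space~$G$ that is \emph{strictly} smoother than~$E$, and it is precisely $1-\theta>1/p_\theta$, i.e.\ $\sigma<1-\theta$, that makes $G\Subset E$; a little care with the averaging windows near $t=0,T$ is also needed. An alternative that sidesteps all pointwise control of $\|v(t)\|_{E_1}$ is time-mollification: for fixed~$\eps$ the mollified family $\{v_\eps:v\in X\}$ is equi-Lipschitz into $E_1\Subset E$, hence relatively compact in $C(0,T;E)$ by Arzel\`a--Ascoli, while $\|v-v_\eps\|_{C(0,T;E)}\to0$ as $\eps\to0$ uniformly on~$X$ follows from the same $K$-functional estimates, after which a diagonal argument concludes.
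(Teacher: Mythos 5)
The paper gives no proof of this lemma: it is imported verbatim from Simon~\cite{Sim87}. Your argument is a correct reconstruction of Simon's own proof --- the mean-value trace estimate $K(\tau,v(t))\le \rho^{-1/p_1}R_1+\tau\rho^{1/p_0'}R_0$ optimised to give uniform boundedness of the pointwise values in $(E_1,E_0)_{\sigma,\infty}$, the equivalence of $1-\theta>1/p_\theta$ with $\sigma<1-\theta$, the compactness step landing in $(E_0,E_1)_{\theta,1}\subset E$, and Arzel\`a--Ascoli --- so it takes essentially the same route as the cited source, and the exponents (in particular $\kappa=\frac{1-\sigma-\theta}{(1-\sigma)p_0'}>0$, with $p_0>1$ forced by the hypothesis) all check out.
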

By the above lemma, and noticing the relation~\eqref{e:alpha-norm},
we have the following theorem.

\begin{theorem}\label{thm:tight}
Assume~\eqref{e:Q-assumption} and $u^\gamma(0)\in\mathcal{H}_\gamma$
with $\|u^\gamma(0)\|_0\leq C_0$ which is independent of~$\gamma$.
For any $T>0$\,, $\mathcal{D}(u^\gamma)$ is tight
in~$C(0,T;\mathcal{H})$\,.
\end{theorem}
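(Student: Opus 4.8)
The plan is to verify the hypotheses of Lemma~\ref{lem:compemb} (Simon's compactness criterion) with a suitable triple of Banach spaces, applied uniformly in the coupling parameter~$\gamma$, and then conclude tightness in $C(0,T;\mathcal{H})$ by the standard argument that a family of laws is tight precisely when it is concentrated, up to small probability, on a relatively compact set. First I would fix the spaces: take $E_1=\mathcal{V}$ (that is, $\mathcal{H}^1$), $E_0=\mathcal{H}$, and $E=\mathcal{H}$ as the target, noting the compact embedding $\mathcal{H}^1\Subset\mathcal{H}$ on the bounded elements~$I_j$ via Rellich--Kondrachov, and choosing the interpolation exponent~$\theta$ and the integrability exponents $p_0,p_1$ so that the condition $1-\theta>1/p_\theta$ holds; the energy bounds available make $p_1=2$ on the $\mathcal{V}$-side and $p_0=2$ on the $\mathcal{H}^{-1}$-side for the time derivative a natural choice, with $\theta$ taken small.

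The substantive input is the uniform-in-$\gamma$ a~priori estimate. From Lemma~\ref{lem:est}, with the uniform bound $\|u^\gamma(0)\|_0\leq C_0$, we get that $u^\gamma$ is bounded in $L^2(0,T;\mathcal{V}_\gamma)$ and $\p_t u^\gamma$ is bounded in $L^2(0,T;\mathcal{H}^{-1})$, both in the mean; crucially, by the norm identity~\eqref{e:alpha-norm} the $\gamma$-dependent norms $\|\cdot\|_{\alpha,\gamma}$ coincide with the fixed norms $\|\cdot\|_\alpha$ for integer~$\alpha$, so these bounds are genuinely uniform in~$\gamma$ and live in fixed spaces independent of~$\gamma$. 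Hence for each~$\eta>0$ there is, by Chebyshev's inequality, a constant~$R_\eta$ (independent of~$\gamma$) such that the event
\begin{equation*}
A^\gamma_\eta=\Big\{\|u^\gamma\|_{L^2(0,T;\mathcal{V})}\leq R_\eta,\ \|\p_t u^\gamma\|_{L^2(0,T;\mathcal{H}^{-1})}\leq R_\eta\Big\}
\end{equation*}
has $\mathbb{P}(A^\gamma_\eta)\geq 1-\eta$. On the set parametrised by~$R_\eta$, Lemma~\ref{lem:compemb} yields a relatively compact subset $K_\eta\subset C(0,T;\mathcal{H})$ containing all such trajectories, whence $\mathbb{P}(u^\gamma\in K_\eta)\geq 1-\eta$ uniformly in~$\gamma$, which is exactly tightness of $\{\mathcal{D}(u^\gamma)\}$ in $C(0,T;\mathcal{H})$.

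The main obstacle I anticipate is not the abstract machinery but the bookkeeping around the $\gamma$-dependence: one must be careful that the compact embedding $\mathcal{V}\Subset\mathcal{H}$ and the continuous embedding $\mathcal{H}^{-1}\supset\mathcal{H}$ do not secretly involve constants that blow up as $\gamma\to 1$. This is precisely what~\eqref{e:alpha-norm} is for --- it decouples the functional-analytic setting from~$\gamma$ at integer regularity --- and I would invoke it explicitly at each step where a $\gamma$-subscripted space appears, replacing $\mathcal{V}_\gamma$ and $\mathcal{H}^2_\gamma$ by the $\gamma$-free $\mathcal{V}$ and $\mathcal{H}^2$ before applying Simon's lemma. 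A secondary point requiring a word of care is that the elements $I_j$ overlap and the inner product on~$\mathcal{H}$ is defined as a sum of one-sided integrals with a removed point $X_j$; since removing a point does not affect $L^2$ or $H^1$ norms, the Rellich embedding still applies element-by-element, and summing over the finitely many~$j$ preserves both the bound and the compactness.
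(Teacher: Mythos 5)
Your overall strategy is exactly the one the paper intends: the paper's ``proof'' of Theorem~\ref{thm:tight} is the single sentence ``By the above lemma, and noticing the relation~\eqref{e:alpha-norm}, we have the following theorem,'' i.e.\ combine the uniform-in-$\gamma$ energy bounds of Lemma~\ref{lem:est} with the norm identity~\eqref{e:alpha-norm} and Simon's criterion (Lemma~\ref{lem:compemb}), then pass from bounded moments to tightness via Chebyshev. Your proposal fills in precisely those steps, and the points you flag as delicate (uniformity of the embedding constants in~$\gamma$, the overlapping elements and the removed points~$X_j$) are the right ones.

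There is, however, one concrete slip in the application of Lemma~\ref{lem:compemb}. You set $E_0=\mathcal{H}$, but the only available bound on the time derivative is $\p_t u^\gamma\in L^2(0,T;\mathcal{H}^{-1})$, so you must take $E_0=\mathcal{H}^{-1}$ (your own event $A^\gamma_\eta$ already tacitly does this). Once $E_0=\mathcal{H}^{-1}$ and $E_1=\mathcal{V}=\mathcal{H}^1$, the hypothesis $(E_0,E_1)_{\theta,1}\subset E=\mathcal{H}$ forces $\theta\geq 1/2$, not ``$\theta$ small'' as you suggest: for small $\theta$ the interpolation space sits near $\mathcal{H}^{-1}$ and does not embed into $\mathcal{H}$. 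With $\theta=1/2$ and $p_0=p_1=2$ one gets $1/p_\theta=1/2=1-\theta$, which fails the \emph{strict} inequality $1-\theta>1/p_\theta$. The fix is available inside the paper's own Lemma~\ref{lem:est}, which bounds $u^\gamma$ in $L^{p_1}(0,T;\mathcal{V}_\gamma)$ for \emph{every} $p_1\in\mathbb{Z}^+$: taking any $p_1\geq 3$ gives $1/p_\theta=1/4+1/(2p_1)<1/2$, and the argument closes. So the route is correct and matches the paper, but the exponent bookkeeping as written would not compile into a valid application of Simon's lemma without this adjustment.
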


Similarly if $u^\gamma(0)\in\mathcal{H}_\gamma^2$ with
$\|u^\gamma(0)\|_{2, \gamma}\leq C_0$ which is independent
of~$\gamma$, for any $T>0$ there is a positive constant $C_T>0$ such
that
\begin{equation*}
\mathbb{E}\sup_{0\leq t\leq T}\|u^\gamma(t)\|_{2,\gamma}\leq C_T\,.
\end{equation*}
Then by the embedding of $H^2(I)\subset C^1(I)$~\cite{Sob},
\begin{equation}\label{e:est-int-bd}
\mathbb{E}\left(\left|\frac{\p u_j^\gamma}{\p x}(X_{j\pm 1},t)\right|\right)\leq C_T\,,\quad 0\leq t\leq T\,.
\end{equation}
By the above estimates we can treat the boundary value in passing to
the limit $\gamma\rightarrow 1$ of full coupling.

By Theorem~\ref{thm:tight}, for any $\kappa>0$ there is a compact
set $K_\kappa\subset C(0, T; \mathcal{H})$ such that
\begin{equation*}
\mathbb{P}\{u^\gamma\in K_\kappa \}\geq 1-\kappa\,.
\end{equation*}
Then there is a function $u\in C(0, T; \mathcal{H})$ and a
subsequence $\gamma_n\rightarrow 1$ as $n\rightarrow\infty$\,, such
that in probability
\begin{equation*}
u^{\gamma_n}\rightarrow u \quad\text{as } n\rightarrow\infty\,.
\end{equation*}
Now we determine the equation solved by the limit~$u$. Define a test
function $\varphi\in C_0^\infty(0, L)$ and define
\begin{equation*}
\varphi_j=\varphi|_{I_j}\,.
\end{equation*}
Then by the boundary conditions we have in the variational form for
the system~\eqref{e:uj}--\eqref{cbd2}
\begin{eqnarray}
&&\langle u^{\gamma_n}(t), \varphi\rangle
\nonumber\\&=&\langle
u^{\gamma_n}(0), \varphi\rangle+\int_0^t\langle
\mathcal{L}_{\gamma_n} u^{\gamma_n}(s)+\alpha\gamma_n^2
u^{\gamma_n}(s),\varphi\rangle\,ds \nonumber\\&&{}
+\int_0^t\langle\alpha F(u^{\gamma_n}(s)), \varphi\rangle\,ds
+\int_0^t\langle\sigma\,dW^{\gamma_n}(s),\varphi\rangle\nonumber\\
&=&\langle u^{\gamma_n}(0),
\varphi\rangle-\sum_j\int_0^t\int_{X_{j-1}}^{X_{j+1}}\frac{\p
u^{\gamma_n}_j(s)}{\p x} \frac{\p\varphi_j}{\p x}\,dx\, ds
\nonumber\\&&{}+
\sigma\sum_j\int_0^t\int_{X_{j-1}}^{X_{j+1}}\,dW^{\gamma_n}_j(s)\varphi_j\,dx\nonumber\\
&&{}+ \alpha
\gamma_n^2\sum_j\int_0^t\int_{X_{j-1}}^{X_{j+1}}u^{\gamma_n}_j(s)
\varphi_j\,dx\,ds+ \alpha\sum_j\int_0^t\int_{X_{j-1}}^{X_{j+1}}
f(u^{\gamma_n}_j(s)) \varphi_j\,dx\, ds\nonumber
\\&&{} -\gamma'_n\sum_j\int_0^t\left[\frac{\p u^{\gamma_n}_{j-1}(X_j,
s)}{\p x}-\frac{\p u^{\gamma_n}_{j+1}(X_j, s)}{\p
x}\right]\varphi_j(X_j)\,ds\nonumber\\&&{}+
\gamma_n'\sum_j\left[\frac{\p u^{\gamma_n}_j(X_{j-1}, s)}{\p
x}-\frac{\p u^{\gamma_n}_j(X_{j+1}, s)}{\p
x}\right]\varphi_j(X_j)\,ds\,.\label{e:variation form}
\end{eqnarray}
Then  by estimate~\eqref{e:est-int-bd},  letting
$n\rightarrow\infty$\,, that is $\gamma_n'\rightarrow 0$\,, the last
two terms disappear. Notice that $f(u^{\gamma_n}_j)\rightarrow
f(u_j)$ weakly in space~$L^2(0,T; L^2)$ and by the assumption
on~$W^{\gamma_n}$ we have, by passing to the limit
$n\rightarrow\infty$\,,
\begin{eqnarray}
\langle u(t), \varphi\rangle &=&\langle u(0),
\varphi\rangle-\sum_j\int_0^t\int_{X_{j-1}}^{X_{j+1}}  \frac{\p
u_j(s)}{\p x}
\frac{\p\varphi_j}{\p x}\,dx\,ds\nonumber\\
&&{}+\alpha\sum_j\int_0^t\int_{X_{j-1}}^{X_{j+1}}
(u_j(s)-u_j^3(s))\varphi\,dx\,ds
\nonumber\\&&+\int_0^t\langle\sigma\,d\widetilde{W}(s),\varphi\rangle\label{e:limit
1}
\end{eqnarray}
with $\widetilde{W}(t)=(W_j(t))$ which is well defined by
Remark~\ref{rem:W-limit}. Then a density argument yields that
$u=(u_j)$ solves the following stochastic equations
\begin{eqnarray*}
\p_tu_j&=&[\p_{xx} u_j+\alpha u_j-\alpha (u_j)^3]\,dt+\sigma\,\p_tW_j \quad \text{on }I_j,\label{e:limit-1-u1}
\end{eqnarray*}
with coupling boundary conditions
\begin{equation*}
u_j(X_{j\pm1}, t)=u_{j\pm 1}(X_{j\pm 1}, t)\,, \quad
u_{j}(x,t)=u_{j\pm M}(x\pm L,t)\,.
\end{equation*}

\begin{remark}
By the boundary condition~\eqref{e:gamma-1bc} and Remark~\ref{rem:W-limit}, the distributions of  $u_j$ and $u_{j+1}$ in space $C(0, T; L^2(X_j, X_j+h))$ coincides.

\end{remark}

 Now define
\begin{equation*}
u(x,t)=u_j(x,t),\quad x\in [X_j, X_j+h]
\end{equation*}
and an $L^2(I)$-valued Wiener process~$\overline{W}(x,t)$ as
\begin{equation*}
\overline{W}(x,t)=W_j(x,t), \quad x\in[X_j, X_j+h].
\end{equation*}
Then~$u(x,t)$~solves the stochastic reaction-diffusion equation~\eqref{e:sRDe}--\eqref{e:sRDe-bd} with the noise term $W(t)$ replaced by $\overline{W}(t)$ without changing the distribution. So~\eqref{e:uj}--\eqref{cbd2} recovers the original system~\eqref{e:sRDe}--\eqref{e:sRDe-bd}, in distribution, in the limit of full coupling, as $\gamma\rightarrow 1$\,.

%

\section{Amplitudes on the elements}\label{sec:asy}
We derive a discrete  macroscopic approximation
to the system of \spde{}s~\eqref{e:uj}--\eqref{cbd2} based upon small coupling parameter
$\gamma>0$\,. By the analysis on operator~$\mathcal{L}_0$ in
section~\ref{sec:coulping BDC}, for $\gamma=0$ the dominant mode
is~$(e_{j0})$, so by hyperbolicity we expect that for small
$\gamma>0$, the dominant mode is~$(e_{j0}^\gamma)$. This is followed
by the analysis on the continuity of $\{(e^\gamma_{jk})\}_k$ and
$\lambda_k(\gamma)$ on coupling parameter~$\gamma$. Further the
asymptotic expansion for $(e_{j0}^\gamma)$ in $\gamma$   shows that
the grid value approximates the amplitude on each
element.

For this we study the continuity properties of~$\mathcal{L}_\gamma$ as coupling $\gamma\rightarrow 0$\,.  We use variational convergence for operators~\cite{Att}. For any subsequence~$\gamma_n$ with $\gamma_n\rightarrow 0$ as $n\rightarrow\infty$\,, we introduce the G-convergence for~$\mathcal{L}_{\gamma_n}$.
\begin{definition}[\textbf{G-convergence}]
Operator~$\mathcal{L}_{\gamma_n}$ is said to be graph-convergent (G-convergent) to~$\mathcal{L}_0$ as $n\rightarrow\infty$ if for every~$(u, v)$ with $v=\mathcal{L}_0u$, there exists a sequence~$(u^n, v^n)$ with $v^n=\mathcal{L}_{\gamma_n}u^n$ such that $u^n\rightarrow u$ strongly in~$\mathcal{V}$ and $v^n\rightarrow v$ strongly in~$\mathcal{V}^*$, the dual space of~$\mathcal{V}$.
\end{definition}

Now for any $u=(u_j(x))\in \mathcal{V}_0$, denote by $v=\mathcal{L}_0u$\,. First we choose bounded set $\{v^n\}\subset\mathcal{H}_{\gamma_n}$ such that $v^n\rightarrow v$ in the dual space~$\mathcal{V}^*$. Then solve the following equation
\begin{equation}\label{e:un}
\mathcal{L}_{\gamma_n}u^n=v^n.
\end{equation}
By the relation~\eqref{e:alpha-norm}, $\{u^n\}$~is bounded
in~$\mathcal{H}^2$ which yields that $\{u^n\}$~is compact
in~$\mathcal{V}$. Then there is a subsequence, which we still denote
by $\{u^n\}$, that converges to~$\tilde{u}$ in $\mathcal{V}$ as
$n\rightarrow\infty$\,. Multiplying testing function $\varphi\in
C_0^\infty(0, L)$ on both sides of~\eqref{e:un} and passing to the
limit $n\rightarrow\infty$\,, we have
\begin{equation}\label{e:un0}
\mathcal{L}_0\tilde{u}=v
\end{equation}
which yields that $u=\tilde{u}$ by the uniqueness of the solution to~\eqref{e:un0}. Then we have $\mathcal{L}_{\gamma_n}$ is G-convergent to~$\mathcal{L}_0$\,.

Now we draw the following result on the continuity of eigenvalues
and eigenfunctions in coupling~$\gamma$~\cite{WR09}.
\begin{theorem}\label{thm:cov}
\begin{equation*}
\lim_{\gamma\rightarrow
0}\lambda_k(\gamma)=\lambda_k=-k^2\pi^2/h^2,\quad k=0,1,2, \ldots\,.
\end{equation*}
Let $m_k$ be the multiplicity of~$\lambda_k(\gamma)$, the sequence
of subspaces~$\mathbb{L}^{\gamma}_k$ of dimension~$m_k$ generated by
$((e_{jk}^{\gamma,1}), \ldots, (e_{jk}^{\gamma,m_k}))$ converges
in~$\mathcal{H}$ to the eigenspace of~$\mathcal{L}_0$ corresponding
to $\lambda_k=-k^2\pi^2/h^2$.
\end{theorem}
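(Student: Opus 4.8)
The plan is to upgrade the G-convergence of $\mathcal{L}_{\gamma_n}$ to $\mathcal{L}_0$, already established above, to \emph{norm resolvent convergence}: this is the form of convergence that controls the eigenvalues together with their multiplicities and the associated spectral projections, and both assertions of the theorem then follow from standard perturbation theory. Since $-\mathcal{L}_\gamma$ is self-adjoint and positive for every $\gamma\in[0,1]$ (with $\lambda_0(\gamma)\ge0$, and $\lambda_0=0$ only at $\gamma=0$), the point $\mu=1$ lies in the resolvent set of each $\mathcal{L}_\gamma$ and $R_\gamma:=(I-\mathcal{L}_\gamma)^{-1}$ satisfies $\|R_\gamma\|_{\mathcal{L}(\mathcal{H})}\le1$ uniformly in $\gamma$. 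The G-convergence proved above --- including the closedness argument that identifies $\mathcal{L}_0\tilde u=v$ from solutions of $\mathcal{L}_{\gamma_n}u^n=v^n$ --- is exactly the graph (Mosco-type) convergence of the maximal monotone operators $-\mathcal{L}_{\gamma_n}$ to $-\mathcal{L}_0$, so by the variational-convergence theory of \cite{Att} it is equivalent to $R_{\gamma_n}\to R_0$ strongly in $\mathcal{H}$.

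The crux is to improve this to norm convergence, for which I would exploit the $\gamma$-uniform elliptic estimate furnished by \eqref{e:H1} and \eqref{e:alpha-norm}. Expanding $g=\sum_k g_{j,k}e_{j,k}^\gamma$ one has $R_\gamma g=\sum_k(1+\lambda_k(\gamma))^{-1}g_{j,k}e_{j,k}^\gamma$, so that
\[
\|R_\gamma g\|_2^2=\|R_\gamma g\|_{2,\gamma}^2=\sum_k\frac{\lambda_k(\gamma)^2}{(1+\lambda_k(\gamma))^2}\,|g_{j,k}|^2\le\|g\|_0^2 ,
\]
and likewise $\|R_\gamma g\|_0,\|R_\gamma g\|_1\le\|g\|_0$; thus $R_\gamma$ carries bounded sets of $\mathcal{H}$ into bounded sets of $\mathcal{H}^2$, uniformly in $\gamma\in[0,1]$. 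Since the one-dimensional embedding $\mathcal{H}^2\Subset\mathcal{H}$ is compact, $\{R_\gamma\}_{\gamma\in[0,1]}$ is collectively compact, and norm convergence then follows directly: for an arbitrary bounded sequence $(g_n)$ in $\mathcal{H}$, pass to a weak limit $g_n\rightharpoonup g$; then $(R_{\gamma_n}g_n)$ is bounded in $\mathcal{H}^2$, hence relatively compact in $\mathcal{H}$ by Rellich, and every subsequential limit is identified with $R_0g$ through the variational passage $\gamma_n\to0$ applied --- exactly as in the proof of G-convergence --- to $\mathcal{L}_{\gamma_n}(R_{\gamma_n}g_n)=R_{\gamma_n}g_n-g_n$. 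Since also $R_0g_n\to R_0g$ ($R_0$ being compact), $(R_{\gamma_n}-R_0)g_n\to0$ in $\mathcal{H}$; as $(g_n)$ was arbitrary this gives $\|R_{\gamma_n}-R_0\|_{\mathcal{L}(\mathcal{H})}\to0$, and therefore $(\zeta-\mathcal{L}_{\gamma_n})^{-1}\to(\zeta-\mathcal{L}_0)^{-1}$ in operator norm for every $\zeta\notin\sigma(\mathcal{L}_0)$.

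From here the conclusions are routine. The nonzero eigenvalues of the compact, positive, self-adjoint operator $R_\gamma$ are $(1+\lambda_k(\gamma))^{-1}$, and these are norm-continuous counting multiplicity, so $\lambda_k(\gamma_n)\to\lambda_k=-k^2\pi^2/h^2$ for each $k$, with no escape or spurious accumulation of spectrum. For the isolated eigenvalue $\lambda_k$ of $\mathcal{L}_0$ of multiplicity $m_k$, take a small circle $\Gamma$ enclosing $\lambda_k$ and no other point of $\sigma(\mathcal{L}_0)$; for $n$ large it separates $\sigma(\mathcal{L}_{\gamma_n})$ as well, and the Riesz projections
\[
P_k^{\gamma_n}=\frac{1}{2\pi i}\oint_\Gamma(\zeta-\mathcal{L}_{\gamma_n})^{-1}\,d\zeta\ \longrightarrow\ P_k^{0}=\frac{1}{2\pi i}\oint_\Gamma(\zeta-\mathcal{L}_{0})^{-1}\,d\zeta
\]
converge in operator norm. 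Then $\|P_k^{\gamma_n}-P_k^0\|<1$ for large $n$, so the ranks coincide, $\dim\mathrm{Range}\,P_k^{\gamma_n}=m_k$; hence $\mathrm{Range}\,P_k^{\gamma_n}=\mathbb{L}_k^{\gamma_n}$ and the norm convergence of the projections is precisely the convergence of $\mathbb{L}_k^{\gamma_n}$ to $\ker(\mathcal{L}_0-\lambda_k)$ in $\mathcal{H}$, so in particular one may choose orthonormal eigenbases converging in $\mathcal{H}$. Finally, none of these limits depends on the chosen sequence $\gamma_n\to0$, so the usual subsequence argument promotes them to limits as $\gamma\to0$, which is the assertion of the theorem.

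I expect the single real obstacle to be the passage from strong to norm resolvent convergence. G-convergence by itself yields only strong resolvent convergence, under which eigenvalues are merely lower semicontinuous and spectral pollution or loss of multiplicity is not excluded; what rescues the argument is the structural fact, \eqref{e:H1} and \eqref{e:alpha-norm}, that the interelement coupling deforms the form domain $\mathcal{H}_\gamma$ without changing the graph norm, which makes $\{R_\gamma\}$ uniformly $\mathcal{H}\to\mathcal{H}^2$-bounded and hence collectively compact. The two points needing care are the uniformity of that bound down to $\gamma=0$ (so that $\lambda_0(\gamma)\to0$ is harmless at $\mu=1$) and carrying out the variational identification of limits simultaneously with the $\gamma_n\to0$ change of coupling conditions, exactly as in the G-convergence step.
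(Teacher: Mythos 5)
Your proposal is correct, and it completes the argument by a genuinely different route than the paper takes. The paper establishes only the first ingredient in the text preceding the theorem --- the G-convergence of $\mathcal{L}_{\gamma_n}$ to $\mathcal{L}_0$, obtained from the uniform $\mathcal{H}^2$ bound via~\eqref{e:alpha-norm} and compactness in~$\mathcal{V}$ --- and then deduces the eigenvalue and eigenspace convergence by appealing to the variational-convergence (Mosco-convergence) theory of~\cite{Att} and the companion report~\cite{WR09}; no further argument is given in this document. You instead upgrade the same G-convergence to \emph{norm} resolvent convergence by observing that the $\gamma$-independent identity $\|R_\gamma g\|_{2,\gamma}=\|R_\gamma g\|_2$ makes the family $\{R_\gamma\}$ uniformly $\mathcal{H}\to\mathcal{H}^2$ bounded, hence collectively compact, and then you extract the conclusions from Kato-style perturbation theory (continuity of the discrete spectrum counting multiplicity, and rank-preserving norm convergence of the Riesz projections). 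This buys a self-contained, quantifiable proof that makes explicit why multiplicities cannot be lost and why no spurious spectrum appears --- points that mere strong resolvent convergence (equivalently, bare G-convergence) would not settle, exactly as you flag; the paper's route is shorter but delegates precisely this step to the cited literature. Your Riesz-projection formulation also handles correctly the possible splitting of the triple eigenvalues $4m^2\pi^2/h^2$ for $\gamma>0$, which is the honest reading of the theorem's statement about $\mathbb{L}^\gamma_k$. The only caveats are cosmetic: the sign convention $\lambda_k=-k^2\pi^2/h^2$ in the statement conflicts with the paper's listing of the (positive) spectrum of $-\mathcal{L}_0$, and in identifying the limit $w=R_0g$ you must pass the $\gamma_n$-dependent derivative coupling conditions to their insulating limit, which requires the $C^1$ trace control coming from the uniform $\mathcal{H}^2$ bound --- you have that bound in hand, and this is the same level of care the paper's own G-convergence paragraph implicitly assumes.
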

\begin{remark}
Such convergence of the eigenspaces is called
Mosco-con\-ver\-gence~\cite{Att}. In the sense of this convergence,
$(e_{j,k}^{\gamma, l})$ may converges to $(e_{j,k}^{l'})$ instead of
$(e_{j,k}^{l})$\,, $1\leq l\neq l'\leq m_k$\,.
\end{remark}
By the above result for small coupling $\gamma>0$\,, we  write for each element $j=1,\ldots, M$\,,
\begin{equation*}
e^\gamma_{j,0}(x)=e_{j,0}(x)+\tilde{e}^\gamma_{j,0}(x)
\end{equation*}
with
\begin{equation*}
\tilde{e}^\gamma_{j,0}(x)\rightarrow 0  \quad\text{as }
\gamma\rightarrow 0\,.
\end{equation*}
Next we show that $\tilde{e}_{j,0}^\gamma(x)$ has an expansion to
$\gamma^2$ terms in the coupling parameters:
\begin{equation*}
\frac{\p^2 }{\p
x^2}{\tilde{e}}_{j,0}^\gamma(x)=\lambda_0(\gamma){\tilde{e}}_{j,0}^\gamma(x)+\lambda_0(\gamma)/\sqrt{2h}\,,
\end{equation*}
with boundary condition (\ref{cbd1})--(\ref{cbd2})\,.  Now if we
account for the coupling and boundary condition by
\begin{align*}
\tilde{e}_{j,0}^\gamma(x)=&
\tilde{e}_{j,0}^\gamma(X_j)+\bar{e}_{j,0}^\gamma(x)
  \\&{}
  +\begin{cases}
  \frac{\gamma}{h}\left[e_{j,0}^\gamma(X_j)-e_{j-1,0}^\gamma(X_{j-1})\right](x-X_j)\,,& X_{j-1}\leq x\leq
X_j\,,\\
\frac{\gamma}{h}\left[e_{j+1,0}^\gamma(X_{j+1})-e_{j,0}^\gamma(X_j)\right](x-X_j)\,,&
X_j\leq x\leq X_{j+1}\,.
  \end{cases}
\end{align*}
Then $\bar{e}_{j,0}(x)$ solves
\begin{equation}\label{e:bar-e}
\frac{\p^2 }{\p
x^2}{\bar{e}}_{j,0}^\gamma(x)=\lambda_0(\gamma){\tilde{e}}_{j,0}^\gamma(x)+\lambda_0(\gamma)/\sqrt{2h}
\end{equation}
with boundary condition
\begin{equation*}
\bar{e}_{j,0}(X_{j\pm 1})=\bar{e}_{j,0}(X_{j})=0\,.
\end{equation*}
But, by~\eqref{cbd2} we also have coupling in the derivatives:
\begin{eqnarray*}
&&\p_x\bar{e}_{j,0}^\gamma(X_j^-)-\p_x\bar{e}_{j,0}^\gamma(X_j^+)+\gamma
\p_x\bar{e}_{j-1,0}^\gamma(X_j)-\gamma \p_x\bar{e}_{j+1,0}^\gamma(X_j)\nonumber\\
&&\quad{}-\gamma'\p_x\bar{e}_{j,0}^\gamma(X_{j-1})+\gamma'\p_x\bar{e}_{j,0}^\gamma(X_{j+1})\\{}
&=&
2\frac{\gamma^2}{h}\left[e_{j+1,0}^\gamma(X_{j+1})-2e_{j,0}^\gamma(X_j)+e_{j-1,0}^\gamma(X_{j-1})
\right]\,, \quad j=1, 2,\ldots, M\,,
\end{eqnarray*}
Then by the above boundary condition, equation (\ref{e:bar-e}) and
the fact $\tilde{e}_{j,0}^\gamma(x)\rightarrow 0$ as
$\gamma\rightarrow 0$\,, we have
\begin{equation*}
\lambda_0(\gamma)=\mathcal{O}(\gamma^2) \quad\text{as }
\gamma\rightarrow 0
\end{equation*}
which implies that $\tilde{e}^\gamma_{j0}(x)$ have an expansion to
$\gamma^2$~terms in the coupling parameter.

Assume we have the following asymptotic expansion for each element,
$j=1,\ldots, M$\,,
\begin{equation}\label{e:ej0}
e_{j,0}^\gamma(x)=e_{j,0}^\gamma(X_j)+\gamma
F^\gamma_{j,1}(x)+\gamma^2F^\gamma_{j,2}(x)+F_{j,3}^\gamma(x)
\end{equation}
where $F_{j,3}^\gamma(x)=\Ord{\gamma^3}$\,. By $\lambda_0(0)=0$ and
the coupling boundary condition~(\ref{cbd1})--(\ref{cbd2})\,,
$F^\gamma_{j, k}$\,, $k=1,2$\,, are $k$th order polynomial in $x$\,.
Then also by the boundary condition (\ref{cbd1})--(\ref{cbd2}) we
have
\begin{eqnarray}
F^\gamma_{j,1}(x)=
  \begin{cases}
   \frac{e_{j,0}^\gamma(X_j)-e_{j-1,0}^\gamma(X_{j-1})}{h}(x-X_j),&
X_{j-1}\leq x\leq X_j\,,\\
    \frac{e_{j+1,0}^\gamma(X_{j+1})-e_{j,0}^\gamma(X_j)}{h}(x-X_j),&
X_j\leq x\leq X_{j+1}\,,
\end{cases}
\label{e:Fj1}
\end{eqnarray}
and
\begin{eqnarray}
F^\gamma_{j,2}(x)=
  \begin{cases}
   A_j(x-X_j)(x-X_{j-1}),&
X_{j-1}\leq x\leq X_j\,,\\
    A_j(x-X_j)(x-X_{j+1}),&
X_j\leq x\leq X_{j+1}\,,
\end{cases}
\label{e:Fj2}
\end{eqnarray}
with
\begin{equation*}
A_j=\frac{e_{j-1,
0}^\gamma(X_{j-1})-2e_{j,0}^\gamma(X_j)+e_{j+1,0}^\gamma(X_{j+1})}{2h^2}\,.
\end{equation*}
The above asymptotic expansion shows that for small coupling $\gamma>0$\,, the first mode is dominating and the grid value~$u_j^\gamma(X_j, t)$ approximates the amplitude of the field~$u^\gamma(x,t)$ on the element~$I_j$.

\section{Macroscopic  models for small coupling}
\label{sec:macr-model} By the asymptotic expansion in the previous section~\ref{sec:asy} we derive a discrete macroscopic approximation model to~\eqref{e:uj}--\eqref{cbd2} for small coupling $\gamma>0$\,. For this we first apply an averaging method to reduce~\eqref{e:uj}--\eqref{cbd2} onto the slow mode~$(e_{j0}^\gamma)$.

We split $(u_j^\gamma)$ into slow part and fast part. Define map $P_0^\gamma$ on $\mathcal{H}_\gamma$ to $\mathcal{H}$
\begin{equation*}
P^\gamma_0(u_j^\gamma)=\left(\langle u_j^\gamma, e_{j,0}^\gamma
\rangle e_{j,0}^\gamma(x)/\|e_{j,0}^\gamma\|_0^2\right)\quad
\text{and}\quad
 P^\gamma_1=I-P^\gamma_0 \,,
\end{equation*}
where $I$~is the identity operator on~$\mathcal{H}_\gamma$. And for no coupling, $\gamma=0$\,, write $P_0=P_0^0$ and $P_1=P_1^0$\,. Then denote by $(u^\gamma_j(x,t))$ the solution to system~\eqref{e:uj}--\eqref{cbd2} and make the following expansion
\begin{equation*}
(u_j^\gamma(x,t))=\sum_{k=0}^\infty
a_k^\gamma(t)(e_{j,k}^\gamma(x)).
\end{equation*}
Now define the slow part and fast part, respectively,
\begin{equation*}
U^\gamma(x,t)=(a_0^\gamma(t)e_{j,0}^\gamma(x)) \quad\text{and}\quad
V^\gamma(x,t)=(u_j^\gamma(x,t))-U^\gamma(x,t).
\end{equation*}
Then we have that these satisfy the coupled \spde{}s
\begin{eqnarray}
dU^\gamma&=&\Big[\mathcal{L}_\gamma U^\gamma+\alpha\gamma^2
U^\gamma+\alpha P^\gamma_0F(U^\gamma, V^\gamma)\Big]dt
 +\sigma\gamma dB_0^\gamma\,,\label{e:abs-U1}\\
dV^\gamma&=&\Big[\mathcal{L}_\gamma
 V^\gamma+\alpha\gamma^2 V^\gamma+\alpha P^\gamma_1F(U^\gamma,
V^\gamma)\Big]dt+\sigma\gamma dB_1^\gamma\,,\label{e:abs-U2}
\end{eqnarray}
where
\begin{equation*}
B_0^\gamma(t)=\Big(\sqrt{q^h_{j,0}}\beta_{j,0}(t)\Big)
\quad\text{and}\quad
B_1^\gamma(t)=\Big(\sum_{k=1}^\infty\sqrt{q^h_{j,k}}\beta_{j,k}(t)e_{j,k}^\gamma(x)\Big).
\end{equation*}
By the analysis of Section~\ref{sec:asy}, $\lambda_0(\gamma)=
\Ord{\gamma^2}$ as $\gamma\rightarrow 0$\,, then for small coupling~$\gamma>0$\,, \eqref{e:abs-U1}--\eqref{e:abs-U2} have completely separated time scales. Thus an averaging approach applies to derive a macroscopic reduced system over the time scale~$\gamma^{-2}T$ for any $T>0$~\cite{WR08}. For this introduce a slow time scale $t'=\gamma^2t$ and small fields
\begin{equation}\label{e:small fields}
U^\gamma(t)=\gamma \widetilde{U}^\gamma(\gamma^2 t)
\quad\text{and}\quad
V^\gamma(t)=\gamma \widetilde{V}^\gamma(\gamma^2 t),
\end{equation}
then on the slow time scale~$t'$
\begin{eqnarray*}
d\widetilde{U}^\gamma(t')&=&\left[\gamma^{-2}\mathcal{L}_\gamma
\widetilde{U}^\gamma+\alpha\widetilde{U}^\gamma
  +\alpha P^\gamma_0F(\widetilde{U}^\gamma, \widetilde{V}^\gamma)\right]dt'+\sigma\gamma^{-1} d\widetilde{B}_0^\gamma(t'),\\
d\widetilde{V}^\gamma(t')&=&\left[\gamma^{-2}\mathcal{L}_\gamma
\widetilde{V}^\gamma+\alpha\widetilde{V}^\gamma +\alpha
P^\gamma_1F(\widetilde{U}^\gamma,
\widetilde{V}^\gamma)\right]dt'+\sigma\gamma^{-1}
d\widetilde{B}_1^\gamma(t').
\end{eqnarray*}
Here $\widetilde{B}_0^\gamma(t')=\gamma B_0^\gamma(\gamma^{-2}t')$ and $\widetilde{B}_1^\gamma(t')=\gamma B_1^\gamma(\gamma^{-2}t')$ are Wiener processes with the same distributions as those of $B_0^\gamma(t')$~and~$B_1^\gamma(t')$, respectively, due to the scaling properties of the Wiener process.

Let $\tilde{\eta}^\gamma(t')=(\tilde{\eta}_{j0}^\gamma(t'))\in P^\gamma_1 \mathcal{H}_\gamma$ be the unique stationary solution of the following linear equation
\begin{equation}\label{e:tilde-eta}
d\tilde{\eta}^\gamma(t')=\gamma^{-2}\mathcal{L}_\gamma\tilde{\eta}^\gamma(t')\,dt'+\sigma\gamma^{-1}\,d\widetilde{B}^\gamma_1(t').
\end{equation}
Then by an energy estimate and almost the same discussion to that by Wang and Roberts~\cite{WR08}, for any $T\geq 0$\,, there is a positive constant~$C_T$ such that
\begin{equation}\label{e:appr-high}
\sup_{0\leq t'\leq
T}\mathbb{E}\|\widetilde{V}^\gamma(t')-\tilde{\eta}^\gamma(t')\|_0\leq
\gamma^2C_T\left(\mathbb{E}\|\tilde{\eta}^\gamma(0)\|_0+\|(u^\gamma_{j0}(0))\|_0^6\right).
\end{equation}
We have no explicit expressions of $(e_{jk}^\gamma(x))$\,, $k\geq 1$\,, so for our purpose we derive another approximation for~$\widetilde{V}^\gamma(t')$.
\begin{lemma}
Assume bound~\eqref{e:Q-assumption}. For any $T>0$\,, there is positive constant~$C_T$ such that
\begin{equation*}
\sup_{0\leq t'\leq
T}\mathbb{E}\|\widetilde{V}^\gamma(t')-\tilde{\eta}(t')\|_0\leq
\gamma
C_T\left(\mathbb{E}\|\tilde{\eta}^\gamma(0)\|_0+\mathbb{E}\|\tilde{\eta}(0)\|_0+\|(u_{j0}(0))\|_0^6\right),
\end{equation*}
where $\tilde{\eta}(t')=(\tilde{\eta}_{j0}(t'))$ is the unique stationary solution of the following linear equation
\begin{equation*}
d\tilde{\eta}(t')=\gamma^{-2}\mathcal{L}_0\tilde{\eta}(t')\,dt'+\sigma\gamma^{-1}\,d\widetilde{B}_1(t')
\end{equation*}
and
\begin{equation*}
\widetilde{B}_1(t')=\sum_{k=1}^\infty\Big(\gamma\sqrt{q^h_{j,k}}\beta_{j,k}(\gamma^{-2}t')e_{j,k}(x)\Big)
\end{equation*}
with distributions independent of coupling parameter~$\gamma$.
\end{lemma}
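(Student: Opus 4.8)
The plan is to insert the $\gamma$-world stationary process $\tilde\eta^\gamma$ between $\widetilde V^\gamma$ and $\tilde\eta$ and to estimate the two resulting pieces separately. The triangle inequality gives
\[
\sup_{0\le t'\le T}\mathbb{E}\|\widetilde V^\gamma(t')-\tilde\eta(t')\|_0\le \sup_{0\le t'\le T}\mathbb{E}\|\widetilde V^\gamma(t')-\tilde\eta^\gamma(t')\|_0+\sup_{0\le t'\le T}\mathbb{E}\|\tilde\eta^\gamma(t')-\tilde\eta(t')\|_0,
\]
and the first term on the right is already bounded in~\eqref{e:appr-high} by $\gamma^2C_T(\mathbb{E}\|\tilde\eta^\gamma(0)\|_0+\|(u^\gamma_{j0}(0))\|_0^6)$, which (after using $\gamma^2\le\gamma$ and that $\|(u^\gamma_{j0}(0))\|_0$ is controlled by the data uniformly in~$\gamma$) has the required shape. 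So everything reduces to a quantitative continuity in~$\gamma$ of the stationary Ornstein--Uhlenbeck process of the fast modes, namely
\[
\sup_{0\le t'\le T}\mathbb{E}\|\tilde\eta^\gamma(t')-\tilde\eta(t')\|_0\le \gamma C_T\big(\mathbb{E}\|\tilde\eta^\gamma(0)\|_0+\mathbb{E}\|\tilde\eta(0)\|_0\big).
\]

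For this I would first put both processes on one probability space driven by one common family of independent standard Brownian motions: since the law of $\widetilde B_1$ does not depend on~$\gamma$, realise $\widetilde B_1^\gamma$ and $\widetilde B_1$ simultaneously by letting the $k$th scalar component of each be driven by the same $\tilde\beta_{j,k}$. The two-sided stochastic convolutions defining the stationary solutions then make $(\tilde\eta^\gamma,\tilde\eta)$ a jointly stationary pair, so $\mathbb{E}\|\tilde\eta^\gamma(t')-\tilde\eta(t')\|_0$ is independent of~$t'$ and the supremum over $[0,T]$ equals its value at $t'=0$. It is precisely this endpoint $t'=0$, where the fast decay $\mathrm e^{-\gamma^{-2}\lambda_k(\gamma)t'}$ gives no help, that forces a genuine coupling rather than a transient estimate, and it is why the sizes $\mathbb{E}\|\tilde\eta^\gamma(0)\|_0$ and $\mathbb{E}\|\tilde\eta(0)\|_0$ enter. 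Expanding on each element, $\tilde\eta^\gamma(t')=\sum_{k\ge1}\xi^\gamma_k(t')e^\gamma_{j,k}$ and $\tilde\eta(t')=\sum_{k\ge1}\xi_k(t')e_{j,k}$ with $\xi^\gamma_k,\xi_k$ the stationary scalar Ornstein--Uhlenbeck processes of decay rates $\gamma^{-2}\lambda_k(\gamma)$, $\gamma^{-2}\lambda_k$ and common diffusion $\sigma\gamma^{-1}\sqrt{q^h_{j,k}}$ driven by $\tilde\beta_{j,k}$, I would split
\[
\tilde\eta^\gamma-\tilde\eta=\Big(\sum_{k\ge1}(\xi^\gamma_k-\xi_k)e^\gamma_{j,k}\Big)+\Big(\sum_{k\ge1}\xi_k\,(e^\gamma_{j,k}-e_{j,k})\Big).
\]
Using orthonormality and the independence of the $\xi$'s across $j$ and $k$, the mean square of $\tilde\eta^\gamma(0)-\tilde\eta(0)$ is a sum of $\mathbb{E}|\xi^\gamma_k-\xi_k|^2$ and of $\mathbb{E}|\xi_k|^2\|e^\gamma_{j,k}-e_{j,k}\|_0^2$, and a direct computation with the explicit OU covariance gives $\mathbb{E}|\xi^\gamma_k-\xi_k|^2=\sigma^2q^h_{j,k}(\lambda_k(\gamma)-\lambda_k)^2/\big(2\lambda_k(\gamma)\lambda_k(\lambda_k(\gamma)+\lambda_k)\big)$ and $\mathbb{E}|\xi_k|^2=\sigma^2q^h_{j,k}/(2\lambda_k)$.

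It then remains to sum these in~$k$. For the fast modes $\lambda_k,\lambda_k(\gamma)\ge\tfrac12\pi^2/h^2$ once $\gamma$ is small, by Theorem~\ref{thm:cov}, so the spectral gap does not degenerate; both series are then dominated by $\sum_{j,k}q^h_{j,k}$, finite and $\gamma$-uniform by the trace bound~\eqref{e:Q-assumption}. To pull out the factor~$\gamma$ one needs the rates $|\lambda_k(\gamma)-\lambda_k|=\Ord{\gamma\lambda_k}$ and $\|e^\gamma_{j,k}-e_{j,k}\|_0=\Ord{\gamma}$, uniformly (or at least summably against $q^h_{j,k}/\lambda_k$) in~$k$; for $k=0$ this is exactly the content of the asymptotic expansion~\eqref{e:ej0} together with $\lambda_0(\gamma)=\Ord{\gamma^2}$, and the same device---treating the coupling boundary conditions~\eqref{cbd1}--\eqref{cbd2} as an $\Ord{\gamma}$ perturbation of the insulating ones and solving the perturbed eigenproblem---applies mode by mode, the Mosco-convergence rotation within each eigenspace being absorbed by choosing the orientation of $(e^\gamma_{j,k})$ to match $(e_{j,k})$. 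Granting these rates, each series is $\Ord{\gamma^2}$ times a $\gamma$-uniform constant ($\sum_{j,k}q^h_{j,k}/\lambda_k$, respectively $\mathbb{E}\|\tilde\eta(0)\|_0^2$), so $\mathbb{E}\|\tilde\eta^\gamma(0)-\tilde\eta(0)\|_0^2\le C\gamma^2(\cdots)$; taking square roots and using that $\tilde\eta^\gamma(0),\tilde\eta(0)$ are Gaussian, so their $L^1(\Omega)$ and $L^2(\Omega)$ norms are comparable, recovers the stated $\gamma$-linear bound, and combining with the first piece finishes the proof. The main obstacle is exactly this last step: upgrading the qualitative eigen-convergence of Theorem~\ref{thm:cov} to quantitative, $k$-summable $\Ord{\gamma}$ rates for all modes, since the excerpt records the computation only for the ground mode; the rest is routine once those rates are available.
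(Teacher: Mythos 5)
Your proposal follows essentially the same route as the paper's proof: the triangle inequality through the intermediate stationary process $\tilde\eta^\gamma$ with \eqref{e:appr-high} handling the first piece, and then a mode-by-mode comparison of the two Ornstein--Uhlenbeck processes driven by the \emph{same} Brownian motions $\beta_{j,k}$, extracting the factor $\gamma$ from $|\lambda_k(\gamma)-\lambda_k|$ and $\|e^\gamma_{j,k}-e_{j,k}\|_0$ under the trace bound \eqref{e:Q-assumption}. The only real difference is that you use the two-sided stationary representation and the explicit stationary covariance where the paper uses the Duhamel formula from $t'=0$ (your handling of the $t'=0$ endpoint is actually the cleaner of the two), and the quantitative, $k$-summable $\Ord{\gamma}$ spectral rates that you flag as the main outstanding obstacle are precisely what the paper also invokes --- ``by the analysis of Section~\ref{sec:asy} on $\lambda_k(\gamma)$, $k\geq 1$'' --- without proving them beyond the ground mode.
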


\begin{proof}
Expand $\tilde{\eta}^\gamma(t)$ and~$\tilde{\eta}(t)$ by the eigenfunctions of~$\mathcal{L}_\gamma$ and~$\mathcal{L}_0$ respectively as
\begin{equation*}
\tilde{\eta}^\gamma(x, t')=\sum_{k=1}^\infty\tilde{\eta}^\gamma_k(x,
t')
\quad\text{and}\quad \tilde{\eta}(x, t')=\sum_{k=1}^\infty\tilde{\eta}_k(x,
t'),
\end{equation*}
where
\begin{eqnarray*}
\tilde{\eta}^\gamma_k(x, t')&=&\left\langle \tilde{\eta}^\gamma(t'),
(e_{j,k}^\gamma(x)) \right\rangle
(e_{j,k}^\gamma(x))/\|(e_{j,k}^\gamma)\|_0^2\,,\\
\tilde{\eta}_k(x, t')&=&\big\langle \tilde{\eta}(t'), (e_{j,k}(x))
\big\rangle(e_{j,k}(x))/M\,.
\end{eqnarray*}
Then for $k\geq 1$
\begin{equation*}
d\tilde{\eta}_k^\gamma(t')=-\frac{1}{\gamma^2}\lambda_k(\gamma)\tilde{\eta}^\gamma_k(t')dt'
+\frac{\sigma}{\gamma} d\widetilde{B}^\gamma_{1,k}(t')
\end{equation*}
and
\begin{equation*}
d\tilde{\eta}_k(t')=-\frac{1}{\gamma^2}\lambda_k\tilde{\eta}_k(t')dt'
+\frac{\sigma}{\gamma} d\widetilde{B}_{1,k}(t')
\end{equation*}
with
\begin{eqnarray*}
\widetilde{B}_{1,k}^\gamma(t')&=&\gamma\Big(\sqrt{q^h_{j,k}}\beta_{j,k}(\gamma^{-2}t')e^\gamma_{j,k}(x)\Big),\\
\widetilde{B}_{1,k}(t')&=&\gamma\Big(\sqrt{q^h_{j,k}}\beta_{j,k}(\gamma^{-2}t')e_{j,k}(x)\Big).
\end{eqnarray*}
Using the It\^o formula and the stationary property of $\tilde{\eta}^\gamma_k$ and $\tilde{\eta}_k$\,, there is a positive constant~$C$ such that for $k\geq 1$
\begin{equation}\label{e:est-etak}
\mathbb{E}\|\tilde{\eta}^\gamma_k(t')\|_0\leq C \quad\text{and}\quad
\mathbb{E}\|\tilde{\eta}_k(t')\|_0\leq C\,.
\end{equation}

Define the difference $z_k^\gamma(t')=\tilde{\eta}^\gamma_k(t')-\tilde{\eta}_k(t')$ which solves
\begin{equation*}
dz^\gamma_k(t')=-\frac{1}{\gamma^2}\left[\lambda_kz^\gamma_k(t')+
(\lambda_k(\gamma)-\lambda_k)\tilde{\eta}^\gamma_k(t')\right]dt'
+\frac{\sigma}{\gamma}d\left[\widetilde{B}_{1,k}^\gamma(t')-\widetilde{B}_{1,k}(t')\right],
\end{equation*}
and hence
\begin{eqnarray*}
z^\gamma_k(t')&=&e^{-\lambda_kt'/\gamma^2}z^\gamma_k(0)+\frac{\lambda_k-\lambda_k(\gamma)}{\gamma^2}
\int_0^{t'}e^{-\lambda_k(t'-s)/\gamma^2}\tilde{\eta}_k^\gamma(s)\,ds\\
&&{}+\frac{\sigma}{\gamma}\int_0^{t'}e^{-\lambda_k(t'-s)/\gamma^2}d[\widetilde{B}_{1,k}^\gamma(s)-\widetilde{B}_{1,k}(s)].
\end{eqnarray*}
Then by the analysis of Section~\ref{sec:asy} on~$\lambda_k(\gamma)$, $k\geq 1$\,, assumption~\eqref{e:Q-assumption} and the estimates~\eqref{e:est-etak}  we have
\begin{equation*}
\sup_{0\leq t'\leq
T}\mathbb{E}\|\tilde{\eta}^\gamma(t')-\tilde{\eta}(t')\|_0\leq
\gamma
C_T\mathbb{E}\left(\|\tilde{\eta}^\gamma(0)\|_0+\|\tilde{\eta}(0)\|_0\right).
\end{equation*}
Thus by~\eqref{e:appr-high}, the proof is complete.
\end{proof}

Now by~\eqref{e:small fields} and the above result,  for~$V^\gamma$ on the original time scale,
\begin{equation}\label{e:appr-fast}
\sup_{0\leq t\leq
\gamma^{-2}T}\mathbb{E}\|V^\gamma(t)-\gamma\tilde{\eta}(\gamma^2t)\|_0
\leq
\gamma^2
C_T\left(\mathbb{E}\|\tilde{\eta}^\gamma(0)\|_0+\mathbb{E}\|\tilde{\eta}(0)\|_0+\|(u_{j0}(0))\|_0^6\right).
\end{equation}
 Moreover,
\begin{equation}\label{e:E-eta}
\mathbb{E}{\tilde{\eta}^2}(t)=\sigma^2\sum_{k=1}^\infty\frac{1}{2\lambda_k}(q^h_{j,k}e^2_{j,k}(x)).
\end{equation}

For~$\widetilde{U}^\gamma$ we follow an averaging approach~\cite{WR08} which yields the following averaged equation
\begin{equation}\label{e:averged}
d\bar{U}^\gamma(t')=\left[\gamma^{-2}\mathcal{L}_\gamma
\bar{U}^\gamma(t')+\alpha\bar{U}^\gamma(t')+\alpha
P^\gamma_0\bar{F}(\bar{U}^\gamma(t'))\right]dt'
+\sigma\gamma^{-1}\,d\widetilde{B}_0^\gamma(t')
\end{equation}
where $\bar{U}^\gamma=(\bar{u}_{j0}^\gamma)$ and
\begin{equation*}
\bar{F}(\cdot)=
\mathbb{E}\left[F(\cdot,\tilde{\eta}(\omega))\right].
\end{equation*}
By the definition of~$f$ and  that~$\tilde{\eta}$ is Gaussian with zero mean, we have
\begin{equation*}
\bar{F}(\bar{U}^\gamma) = - \left(\left(\bar{u}^\gamma_{j0}\right)^3
+3\bar{u}_{j0}^\gamma\mathbb{E}\tilde{\eta}^2_{j0} \right).
\end{equation*}

Moreover, by a deviation argument~\cite{WR08, WR08-1},  stochastic effects in these subgrid scale fast modes are fed into the slow modes by the nonlinear interaction. So we have the following averaged equation plus deviation
\begin{eqnarray}\label{e:averged+deviation}
d\bar{U}^\gamma(t')&=&\left[\gamma^{-2}\mathcal{L}_\gamma
\bar{U}^\gamma
+\alpha\bar{U}^\gamma+\alpha
P^\gamma_0\bar{F}(\bar{U}^\gamma)\right]dt'
\nonumber\\&&{}
+\sigma\gamma^{-1}\,d\widetilde{B}_0^\gamma(t')+\alpha\gamma
\sqrt{\bar{Q}(\bar{U}^\gamma)}d\bar{\beta}(t')
\end{eqnarray}
with, for fixed~$\bar{U}^\gamma$,
\begin{align*}
\bar{Q}(\bar{U}^\gamma)=2\mathbb{E}\int_0^\infty
&
P_0^\gamma\big[
F(\bar{U}^\gamma+\tilde{\eta}(s))-\bar{F}(\bar{U}^\gamma)\big]
\\&{} \otimes P_0^\gamma\big[
F(\bar{U}^\gamma+\tilde{\eta}(0))-\bar{F}(\bar{U}^\gamma)\big]\,ds \,,
\end{align*}
and $\bar{\beta}(t)=(\bar{\beta}_j(t))$ is an $M$ dimensional standard Brownian motion. For any~$T>0$ and any $\kappa>0$\,, there is a positive constant~$C_{\kappa,T}$ such that
\begin{equation*}
\mathbb{P}\left\{{\textstyle\sup_{0\leq t'\leq
T}} |\widetilde{U}^\gamma(t')-\bar{U}^\gamma(t')|\leq \gamma^{1+}
C_{k,T} \right\}\geq 1-\kappa\,.
\end{equation*}
Then for the original system~\eqref{e:uj}, by~\eqref{e:small fields}, we have the following reduced equation
\begin{eqnarray}
dU^\gamma_0(t)&=&\left[\mathcal{L}_\gamma
U^\gamma_0(t)+\alpha\gamma^2U_0^\gamma(t)+\alpha
P^\gamma_0\bar{F}(U^\gamma_0(t))\right]dt \nonumber\\&&{}+\sigma
\gamma dB_0^\gamma(t)
+\alpha\gamma^3\sqrt{\bar{Q}(\gamma^{-1}U_0^\gamma(t))}\,d\check{\beta}(t)
\,, \label{e:aver-equ}
\end{eqnarray}
where $\check{\beta}(t)=(\check{\beta}_j(t))=\gamma^{-1}\bar{\beta}(\gamma^2t)$ is the scaled $M$~dimensional standard Browian motion. Then for any $\kappa>0$\,, there is a constant $C_{\kappa, T}>0$ such that
\begin{equation*}
\mathbb{P}\left\{{\textstyle\sup_{0\leq t\leq \gamma^{-2}T}}|U^\gamma(t)-
U^\gamma_0(t)|\leq \gamma^{2+} C_{\kappa,T} \right\}\geq 1-\kappa\,.
\end{equation*}

Now having the reduced system~\eqref{e:aver-equ}, we use the approximation to the amplitude on each element to derive a further approximate model. Write $U^\gamma_0(x,t)=(u^\gamma_{j0}(x, t))$ and define
\begin{equation*}
(U_j(t))=a_0^\gamma(t)\left(e_{j,0}^\gamma(X_j)\right).
\end{equation*}
Then by~\eqref{e:ej0} for $(u^\gamma_{j,0}(x, t))$ we have the
following asymptotic expansion
\begin{eqnarray}\label{e:uj0}
u^\gamma_{j,0}(x,t)
\nonumber&=&a_0^\gamma(t)e_{j,0}^\gamma(X_j)+\gamma a_0^\gamma(t)
F^\gamma_{j,1}(x)+\gamma^2a_0^\gamma(t)F^\gamma_{j,2}(x)+\Ord{\gamma^3}
\nonumber\\
&=&
  \begin{cases}
   U_j(t)+\frac{\gamma}{h}\big(U_j(t)-U_{j-1}(t)\big)(x-X_j)
  \\\quad{} +\frac{\gamma^2}{2h^2} (U_{j-1}(t)-2U_j(t)+U_{j+1}(t))(x-X_j)(x-X_{j-1})
 \\\quad{}+\Ord{\gamma^3},\qquad X_{j-1}\leq x\leq X_j\,,\\
   U_j(t)+\frac{\gamma}{h} (U_{j+1}(t)-U_j(t))(x-X_j)\\\quad
   +\frac{\gamma^2}{2h^2}(U_{j-1}(t)-2U_j(t)+U_{j+1}(t))(x-X_j)(x-X_{j+1})
   \\\quad{}+\Ord{\gamma^3},\qquad X_j \leq x\leq X_{j+1}\,,
  \end{cases}
\end{eqnarray}
Putting~\eqref{e:uj0}  into~\eqref{e:aver-equ} yields
\begin{eqnarray}\label{e:U1-gamma}
dU_j(t)&=&\frac{\gamma^2}{h^2}\left(U_{j+1}(t)-2U_j(t)+U_{j-1}(t)
\right)dt+\gamma^2 \hat{\alpha}_jU_j(t)-\alpha U_j^3(t)
\nonumber\\
&&{}+\sigma \gamma\sqrt{q^h_{j,0}}e^\gamma_{j,0}(x)d\beta_{j,0}(t)
+\alpha\gamma^23\sqrt{2Q_j}U_j(t)e_{j,0}^\gamma(x)d\check{\beta}_j(t)
\nonumber\\&&{} +\Ord{\gamma^3, \alpha^2},
\end{eqnarray}
where
\begin{eqnarray*}&&
\hat{\alpha}_j=\alpha-3\alpha
\sigma^2\sum_{k=1}^\infty\frac{q^h_{j,k}}{2\lambda_k}
 (e_{j,0}(X_j))^2\,,
\\&&
Q_j=\int_0^\infty\mathbb{E}\left[\langle
\tilde{\eta}^2_{j,0}(s)-\mathbb{E}\tilde{\eta}^2_{j,0},e_{j,0}\rangle
\langle
\tilde{\eta}^2_{j,0}(0)-\mathbb{E}\tilde{\eta}^2_{j,0},e_{j,0}\rangle\right]\,ds\,.
\end{eqnarray*}
Here we use the approximation of $e_{j,0}(x)$ to $e^\gamma_{j,0}(x)$
for small $\gamma$\,.

Notice that system~\eqref{e:U1-gamma} is not a complete discrete
model because the noise terms are still described on the
mode~$(e_{j,0}^\gamma(x))$.  In order to give a discrete
approximating model for small coupling~$\gamma$, we explore the
evolution of the amplitude of the basic mode~$(e_{j,0}(x))$. For
this we project~$(U_j)$ onto the basic space~$E_0$ spanned
by~$(e_{j,0}(x))$. However, the fast modes~$V^\gamma$ have a nonzero
projection in basic space~$E_0$; because of the complicated
expression for~$(e_{j,k}^\gamma(x))$, we choose to
project~$\gamma\tilde{\eta}$, which approximates~$V^\gamma$ up to
error of~$\mathcal{O}(\gamma^2)$. For this we first
project~$\gamma\tilde{\eta}$ onto $(e_{j,0}^\gamma(x))$, then
project to~$E_0$.

Notice that for small coupling~$\gamma$, $\frac{1}{\gamma}\tilde{\eta}(t')$ behaves as a noise process. By a martingale approach~\cite{Kes79, WR08, Wata88} we have the following lemma.
\begin{lemma}
Assume bound~\eqref{e:Q-assumption}.  Then $\frac{1}{\gamma}\int_0^{t'}\tilde{\eta}(s)\,ds$ converges in distribution to
\begin{equation*}
\sigma\sum_{k=1}^\infty\sqrt{\frac{1}{\lambda_k}}\left(\sqrt{q^h_{j,k}}e_{j,k}(x)\,\tilde{\beta}_{j,k}(t')
\right) \quad\text{as }\gamma\rightarrow 0\,,
\end{equation*}
where $(\tilde{\beta}_{j,k}(t'))$\,, $k=1,2\ldots$\,, are mutually
independent standard $M$ dimensional Brownian motion in time
scale~$t'$.
\end{lemma}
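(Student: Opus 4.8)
The plan is to exploit the linearity of the fast dynamics: each fast mode of~$\tilde\eta$ is a rescaled stationary Ornstein--Uhlenbeck process whose time integral can be written \emph{exactly} as a scaled Brownian motion plus a boundary term of order~$\gamma$, so that the diffusion approximation reduces to a summation over modes controlled by the trace bound~\eqref{e:Q-assumption}. First I would expand, as in the proof of the preceding lemma, $\tilde\eta(t')=\sum_{k\geq1}\tilde\eta_k(t')$ in the eigenfunctions $(e_{j,k})$ of~$\mathcal{L}_0$, so that on each element $j=1,\dots,M$ the amplitude of~$\tilde\eta_k$ is a scalar stationary Ornstein--Uhlenbeck process driven by~$\beta_{j,k}$ with relaxation rate $\lambda_k/\gamma^2$, and these are mutually independent over~$j$ and~$k$. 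Integrating the linear \sde\ $d\tilde\eta_k=-\gamma^{-2}\lambda_k\tilde\eta_k\,dt'+\sigma\gamma^{-1}\,d\widetilde{B}_{1,k}$ over $[0,t']$ and rearranging gives the identity
\begin{equation*}
\frac1\gamma\int_0^{t'}\tilde\eta_k(s)\,ds=\frac{\gamma}{\lambda_k}\big(\tilde\eta_k(0)-\tilde\eta_k(t')\big)+\frac{\sigma}{\lambda_k}\,\widetilde{B}_{1,k}(t')\,,
\end{equation*}
and since $\widetilde{B}_{1,k}(t')=\gamma\sqrt{q^h_{j,k}}\,\beta_{j,k}(\gamma^{-2}t')\,e_{j,k}(x)$ has, for every $\gamma>0$, the law of a fixed element-wise multiple of a standard Brownian motion $\tilde\beta_{j,k}$, the second term on the right is already a copy of the $k$th summand of the asserted limit, and only the first term remains to be shown negligible.

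For that, stationarity and~\eqref{e:est-etak} bound $\mathbb{E}\|\tilde\eta_k(t')\|_0$ uniformly in~$\gamma$, and I would upgrade this to a uniform-in-$\gamma$ bound on $\mathbb{E}\sup_{0\le t'\le T}\|\tilde\eta_k(t')\|_0$ by a Doob-type maximal inequality for the Ornstein--Uhlenbeck process (which costs at most a factor growing like $\sqrt{\log(1/\gamma)}$ because the rate $\lambda_k/\gamma^2$ blows up), whence $\tfrac{\gamma}{\lambda_k}(\tilde\eta_k(0)-\tilde\eta_k(t'))\to0$ in $C(0,T;\mathcal{H})$ in probability. To go from finitely many modes to the whole series, the displayed identity and stationarity give $\mathbb{E}\sup_{0\le t'\le T}\big\|\gamma^{-1}\int_0^{t'}\tilde\eta_k(s)\,ds\big\|_0^2\le C\,q^h_{j,k}/\lambda_k^2$ with $C$ independent of~$\gamma$; since $\lambda_k=k^2\pi^2/h^2\to\infty$, assumption~\eqref{e:Q-assumption} (equivalently~\eqref{e:trace-assumption}) forces $\sum_k q^h_{j,k}/\lambda_k^2<\infty$, so that $\gamma^{-1}\int_0^{\cdot}\tilde\eta(s)\,ds=\sum_k\gamma^{-1}\int_0^{\cdot}\tilde\eta_k(s)\,ds$ and its candidate limit both converge in $L^2(\Omega;C(0,T;\mathcal{H}))$ with tails small uniformly in~$\gamma$; with the tightness of each term this gives tightness of the family $\{\gamma^{-1}\int_0^{\cdot}\tilde\eta(s)\,ds\}_{\gamma>0}$ in $C(0,T;\mathcal{H})$. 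Finally, for each fixed~$N$ the truncation $\sum_{k\le N}\gamma^{-1}\int_0^{\cdot}\tilde\eta_k(s)\,ds$ converges in distribution, by the modewise identity, to the corresponding truncation of the limiting series (the limiting Brownian motions being mutually independent because the $\beta_{j,k}$ are), and letting $N\to\infty$ together with the uniform tail bound identifies the limit. Equivalently, this realises the diffusion approximation of~\cite{Kes79,Wata88,WR08}, which here is essentially exact because the fast dynamics is linear: the limit is the $\mathcal{H}$-valued martingale $\sigma\sum_k\lambda_k^{-1}\widetilde{B}_{1,k}(t')$ appearing above, whose quadratic variation is deterministic and explicit from the covariance of~$\tilde\eta$.

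The step I expect to be the main obstacle is this interchange of the limit $\gamma\to0$ with the sum over the infinitely many fast modes: one must produce bounds on the time-averaged Ornstein--Uhlenbeck increments that are simultaneously uniform in~$\gamma$ and summable in~$k$, with only the trace condition~\eqref{e:Q-assumption} as input. The per-mode step is essentially algebraic, so the genuine analysis lies entirely in this uniform tail control together with the uniform-in-$\gamma$ maximal inequality for each Ornstein--Uhlenbeck process, whose relaxation rate $\lambda_k/\gamma^2$ degenerates as $\gamma\to0$.
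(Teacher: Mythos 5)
Your overall strategy --- integrate the modewise Ornstein--Uhlenbeck equation exactly, identify the martingale part as the limit, and kill the boundary term and the infinite tail of the mode sum using stationarity and the trace condition --- is sound, and it is more self-contained than what the paper offers: the paper gives no proof of this lemma at all, only the remark that it follows ``by a martingale approach'' with citations to Kesten--Papanicolaou, Watanabe and the authors' averaging paper. So there is no detailed argument in the paper to compare yours against, and your exact-identity route is the natural way to prove a statement of this type when the fast dynamics is linear.

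However, as written your proof does not establish the lemma as stated, and the failure is precisely in the step you call ``essentially algebraic''. Your identity
\begin{equation*}
\frac1\gamma\int_0^{t'}\tilde\eta_k(s)\,ds=\frac{\gamma}{\lambda_k}\big(\tilde\eta_k(0)-\tilde\eta_k(t')\big)+\frac{\sigma}{\lambda_k}\,\widetilde{B}_{1,k}(t')
\end{equation*}
is correct, and since $\widetilde{B}_{1,k}(t')$ has the law of $\sqrt{q^h_{j,k}}\,e_{j,k}(x)\,\tilde\beta_{j,k}(t')$, your martingale part is $\sigma\lambda_k^{-1}\sqrt{q^h_{j,k}}\,e_{j,k}(x)\,\tilde\beta_{j,k}(t')$, whereas the $k$th summand of the asserted limit is $\sigma\lambda_k^{-1/2}\sqrt{q^h_{j,k}}\,e_{j,k}(x)\,\tilde\beta_{j,k}(t')$. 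These differ by a factor of $\sqrt{\lambda_k}$, so your claim that the second term ``is already a copy of the $k$th summand of the asserted limit'' is false, and indeed at the end you write the limit as $\sigma\sum_k\lambda_k^{-1}\widetilde{B}_{1,k}(t')$, which contradicts the display you set out to prove. For what it is worth, the Green--Kubo check, $2\int_0^\infty\gamma^{-2}\mathbb{E}\big[\tilde\eta_k(s)\tilde\eta_k(0)\big]\,ds=\sigma^2q^h_{j,k}/\lambda_k^2$ using the stationary variance~\eqref{e:E-eta} and the correlation time $\gamma^2/\lambda_k$, together with a dimensional check, supports \emph{your} exponent, so the discrepancy most plausibly indicates an error in the lemma's statement rather than in your algebra; but a proof must either match the statement or explicitly flag and correct the mismatch, and silently identifying $\lambda_k^{-1}$ with $\lambda_k^{-1/2}$ is a genuine gap. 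The remaining technical points --- the $\sqrt{\log(1/\gamma)}$ maximal bound for the OU processes whose rates degenerate, and the summability of the tail, which does follow from~\eqref{e:Q-assumption} because $\lambda_k\geq\pi^2/h^2$ makes $\sum_k q^h_{j,k}/\lambda_k^2$ finite --- are handled adequately.
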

Then on the right-hand sides of~\eqref{e:U1-gamma} there are additional noise forcing terms when projected to the basic space~$E_0$: namely
\begin{eqnarray*}
&&\gamma\sigma\sum_{k=1}^\infty\sqrt{\frac{q^h_{j,k}}{\lambda_k}}\left[\langle
e_{j,k}(x), e_{j,0}^\gamma(x)\rangle\,d\hat{\beta}_{j,k}(t)\right]e^\gamma_{j,0}(x)\\
\end{eqnarray*}
where
$(\hat{\beta}_{j,k}(t))=(\gamma^{-1}\tilde{\beta}_{j,k}(\gamma^2
t))$\,, $k=1,2\,,\ldots$\,, are mutually independent scalar standard
Brownian motions. By the expansion of
$(e_{j,0}^\gamma(x))$\,,~\eqref{e:ej0} and the expression of
$(e_{j,k}(x))$\,,
\begin{eqnarray*}
\langle e_{j,k}(x), e_{j,0}^\gamma(x)\rangle&=&\gamma\langle
e_{j,k}(x), F^\gamma_{j,1}(x)\rangle+\mathcal{O}(\gamma^2).
\end{eqnarray*}
Then define
\begin{eqnarray*}
\hat{\beta}^\gamma_{j,0}(t)&=&\sum_{k=1}^\infty
\sqrt{\frac{q^h_{j,k}}{\lambda_k}}\langle e_{j,k}(x),
F^\gamma_{j,1}(x)\rangle \hat{\beta}_{j,k}(t),
\end{eqnarray*}
and for $j=1\,, 2\,, \ldots\,, M$\,, $i=-1\,, 0\,, 1$
\begin{eqnarray*}
&& B_{j,i}(t)=\sqrt{q^h_{j,0}}\beta_{j,0}(t)e_{j-i,0}(X_{j-i}),\quad
\hat{B}^\gamma_{j,i}(t)=\hat{\beta}^\gamma_{j,0}(t)e_{j-i,0}(X_{j-i}),
\\&&
\check{B}_{j,i}(t)=\sqrt{Q_j}\check{\beta}_j(t)e_{j-i,0}(X_{j-i}).
\end{eqnarray*}
Projecting  the above system onto the basic space~$E_0$ and
by~\eqref{e:E-eta} we then have the following macroscopic discrete
approximation model to the \spde~\eqref{e:uj}--\eqref{cbd2} for
small coupling $\gamma>0$
\begin{eqnarray}
dU_j(t)&=&\frac{\gamma^2}{h^2}\left(U_{j-1}(t)-2U_j(t)+U_{j+1}(t)\right)dt+\gamma^2\hat{\alpha}_j
U_j(t)-\alpha U_j^3(t)\nonumber\\ &&{}+ \sigma\gamma
dB_{j,0}(t)+\sigma\gamma^2
d\hat{B}^\gamma_{j,0}(t)+3\gamma^2\sqrt{2}U_j(t)d\check{B}_{j,0}(t)\nonumber\\
&&{}+\frac{\sigma\gamma^2}{4}\left(dB_{j,1}(t)-2d
B_{j,0}(t)+dB_{j,-1}\right) \nonumber\\&&{}
+\frac{\sigma\gamma^3}{4}\left(d\hat{B}^\gamma_{j,1}(t)-2d\hat{B}^\gamma_{j,0}(t)+d\hat{B}^\gamma_{j,-1}\right)\nonumber\\&&{}+
\frac{3\sqrt{2}}{4}\gamma^3U_j(t)\Big(d\check{B}_{j,1}(t)-2d\check{B}_{j,0}(t)
+d\check{B}_{j,-1}\Big) \nonumber\\&&{}
+\Ord{\gamma^3,\alpha^2}.\label{e:Uj-gamma-model}
\end{eqnarray}
Furthermore by (\ref{e:Fj1}),
$F_{j,1}^\gamma=\mathcal{O}(\gamma)$\,. Then a truncation
of~\eqref{e:Uj-gamma-model} to errors $\mathcal{O}(\gamma^3,
\alpha^2)$ and evaluating at full coupling~$\gamma=1$ yields the
following macroscopic discrete system of \sde{}s, $j=1,\ldots,M$\,,
\begin{eqnarray}
dU_j(t)&\approx&\frac{1}{h^2}\left(U_{j-1}(t)-2U_j(t)+U_{j+1}(t)\right)dt+(\hat{\alpha}_j
U_j(t)-\alpha U_j^3(t))\,dt\nonumber\\ &&{}+ \sigma dB_{j,0}(t)+
3\sqrt{2}U_j(t)d\check{B}_{j,0}(t)\nonumber\\
&&{}+\frac{\sigma}{4}\Big(dB_{j,1}(t)-2d B_{j,0}(t)+dB_{j,-1}
\Big).\label{e:Uj-model}
\end{eqnarray}
This system of \sde{}s reduces to the system~\eqref{e:Uj} discussed
in the Introduction as a discrete model of the reaction-diffusion
\spde~\eqref{e:sRDe}.

\section{Consistency of macroscopic discrete model}\label{sec:consistency}
Next we study the consistency of the macroscopic discrete model~\eqref{e:Uj-model} by the definition of the Wiener processes~$W^\gamma(x, t)$ in Section~\ref{sec:coulping BDC}, as~$h$, the size of each element, converges to zero.

By the properties of $e_{j,k}(x)$ and~$e_{j,k}^\gamma(x)$ for small
coupling $\gamma>0$ in section~\ref{sec:coulping BDC} and
section~\ref{sec:asy} respectively, we have
\begin{align}
&\sqrt{2q^h_{j,0}}\beta_{j,0}(t)e_{j,0}^\gamma(x)=\left\langle
W(x,t), e_{j,0}^\gamma(x)\right\rangle
e_{j,0}^\gamma(x)/\|e_{j,0}^\gamma\|_0\nonumber
\\&{}= \sum_{k=0}^\infty\sqrt{q_k}\beta_k(t)\left< e_k(x),
e_{j,0}^\gamma(x)\right> e_{j,0}^\gamma(x)/\|e_{j,0}^\gamma\|_0\nonumber\\
&{}=\sum_{k=0}^\infty\sqrt{q_k}\beta_k(t)\left< e_k(x),
e^\gamma_{j,0}(X_j)+\gamma F_{j,1}(x)+\gamma^2 F_{j,2}(x)+F_{j,3}^\gamma(x) \right> e^\gamma_{j,0}(x)/\|e_{j,0}^\gamma\|_0\nonumber\\
&{}=\sum_{k=0}^\infty\sqrt{q_k}\beta_k(t)\left< e_k(x),
e_{j,0}(X_j)+\gamma F_{j,1}(x)+\gamma^2 F_{j,2}(x)+F_{j,3}^\gamma(x) \right> e_{j,0}(x)+\Ord\gamma  \nonumber\\
&{}=\sqrt{\frac{q_0}{L}}\beta_0(t)+\sum_{m=1}^\infty\sqrt{\frac{2q_{2m}}{L}}
\beta_{2m}(t)\frac{L}{2m\pi}\frac{1}{2h}\left[\sin\frac{2m\pi}{L}X_{j+1}-\sin\frac{2m\pi}{L}X_{j-1}\right]\nonumber\\
&\quad{}+\sum_{m=1}^\infty\sqrt{\frac{2q_{2m-1}}{L}}
\beta_{2m-1}(t)\frac{L}{2m\pi}\frac{1}{2h}\left[\cos\frac{2m\pi}{L}X_{j-1}-\cos\frac{2m\pi}{L}X_{j+1}\right]
\nonumber\\&\quad{}
+\mathcal{O}(h, \gamma)\nonumber\\
&{}=\sum_{k=0}^\infty\sqrt{q_k}\beta_k(t)e_{k}(X_j)+\Ord{h,\gamma}.
\label{e:beta1}
\end{align}
And similarly we have for $l=1,2,\ldots$
\begin{eqnarray}\label{e:beta2}
&&\sqrt{q^h_{j,l}}\beta_{j,l}(t)e_{j,l}^\gamma(x)=\left< W(x,t),
e_{j,0}^\gamma(x)\right>
e_{j,l}^\gamma(x)/\|e_{j,l}^\gamma\|_0\nonumber\\&=&
\sum_{k=1}^\infty\sqrt{q_k}\beta_k(t)\left< e_k(x),
e_{j,l}^\gamma(x)\right> e_{j,l}^\gamma(x)/\|e_{j,l}^\gamma\|_0\nonumber\\
&=&\sum_{k=1}^\infty\sqrt{q_k}\beta_k(t)\left< e_k(x),
e_{j,l}(x)\right> e_{j,l}(x)+\Ord\gamma\nonumber\\
&=&\Ord{h, \gamma}.
\end{eqnarray}

\begin{theorem}\label{thm:consistency}
The macroscopic discrete model~\eqref{e:Uj-model} is consistent to the original stochastic reaction-diffusion equation~\eqref{e:sRDe}--\eqref{e:sRDe-bd}.
\end{theorem}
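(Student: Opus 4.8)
The plan is to read~\eqref{e:Uj-model} as a spatial semi-discretisation of~\eqref{e:sRDe}--\eqref{e:sRDe-bd} and to show, term by term, that upon sampling a smooth solution $u$ of the \spde\ at the grid points $U_j(t)=u(X_j,t)$ the two sides of~\eqref{e:Uj-model} agree up to a residual that tends to zero as the element size $h\to0$; equivalently, that each drift and noise coefficient of~\eqref{e:Uj-model} converges to the corresponding coefficient of the grid-sampled \spde. Since~\eqref{e:Uj-model} is~\eqref{e:Uj-gamma-model} at full coupling $\gamma=1$ with $\Ord{\gamma^3,\alpha^2}$ terms discarded, and since Section~\ref{sec:estimate} already identifies the $\gamma\to1$ limit of the element system~\eqref{e:uj}--\eqref{cbd2} with the original \spde, the remaining task is genuinely the control of the spatial discretisation error as $h\to0$.

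For the deterministic part, $h^{-2}(U_{j-1}-2U_j+U_{j+1})$ is the standard second difference, so a Taylor expansion of the smooth $u$ gives $h^{-2}(U_{j-1}-2U_j+U_{j+1})=\p_{xx}u(X_j,t)+\Ord{h^2}$. For the reaction coefficient I would show $\hat\alpha_j\to\alpha$: using the explicit spectrum $\lambda_k\asymp k^2\pi^2/h^2$ of $\mathcal L_0$, the basic-mode value $e_{j,0}(X_j)^2=1/(2h)$, the scaling $q^h_{j,k}=q_{j,k}h$, and the spectral gap together with bound~\eqref{e:Q-assumption}, the subgrid correction $3\alpha\sigma^2\sum_{k\ge1}\tfrac{q^h_{j,k}}{2\lambda_k}\,e_{j,0}(X_j)^2$ is $\Ord{h^2}$, and likewise $\mathbb E\tilde\eta^2_{j,0}=\Ord{h^2}$ by~\eqref{e:E-eta} and the same estimates. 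Hence $\hat\alpha_j U_j-\alpha U_j^3\to\alpha(u-u^3)$ at the grid points, matching the reaction term of~\eqref{e:sRDe}.

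For the noise the key input is~\eqref{e:beta1}: at $\gamma=1$ the leading stochastic forcing obeys $\sigma\,dB_{j,0}(t)=\sigma\,dW(X_j,t)+\Ord{h}$, so it converges to the grid-value noise $\sigma\,\p_tW$ of~\eqref{e:sRDe}. The two remaining, genuinely subgrid, noise terms vanish as $h\to0$: the multiplicative term $3\sqrt2\,U_j\,d\check B_{j,0}(t)$ has coefficient $3\sqrt2\,U_j\sqrt{Q_j}\,e_{j,0}(X_j)$, and the combination of $\mathbb E\tilde\eta^2=\Ord{h^2}$, the $\Ord{h^2}$ Ornstein--Uhlenbeck decorrelation time of the fast modes, and~\eqref{e:Q-assumption} forces $\sqrt{Q_j}\,e_{j,0}(X_j)=o(1)$; while $\tfrac\sigma4(dB_{j,1}-2dB_{j,0}+dB_{j,-1})$ is, by~\eqref{e:beta1}, the second spatial difference of the grid-sampled $Q$-Wiener process up to $\Ord{h}$, and its quadratic variation $\tfrac{\sigma^2}{16}\sum_k q_k\big(e_k(X_{j+1})-2e_k(X_j)+e_k(X_{j-1})\big)^2$ is a sum of terms each bounded by a fixed multiple of $q_k$ and tending to $0$ pointwise in $k$, hence tends to $0$ by dominated convergence using $\sum_k q_k<\infty$. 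Combining these, every coefficient of~\eqref{e:Uj-model} converges to that of the grid sample of~\eqref{e:sRDe}--\eqref{e:sRDe-bd}, which is the asserted consistency.

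I expect the main obstacle to be the stochastic bookkeeping rather than the deterministic estimates: carrying the normalisation constants relating $(\beta_{j,l},q^h_{j,l},e^\gamma_{j,l})$ to $(\beta_k,q_k,e_k)$ cleanly through~\eqref{e:Wj} and~\eqref{e:beta1}--\eqref{e:beta2}, controlling the infinite $Q$-Wiener sums uniformly in $h$ via the trace condition~\eqref{e:trace-assumption}, and verifying that the subgrid feedbacks ($\hat\alpha_j$, $\check B_{j,0}$, and the discrete Laplacian of the noise) --- which are precisely the noise--diffusion interaction that the model is designed to expose at finite $h$ --- are asymptotically negligible against $\p_{xx}u+\alpha(u-u^3)+\sigma\p_tW$. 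The Taylor estimates and the dominated-convergence step are otherwise routine.
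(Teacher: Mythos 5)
Your proposal is correct and follows essentially the same route as the paper: the paper likewise reduces consistency to checking that $\hat\alpha_j=\alpha+\Ord{h^2}$ (via $q^h_{j,k}=\Ord{h}$, $\lambda_k=k^2\pi^2/h^2$ and $e_{j,0}(X_j)^2=1/(2h)$), that $Q_j=\Ord{h^2}$ so the multiplicative subgrid noise vanishes, and that by~\eqref{e:beta1}--\eqref{e:beta2} the leading forcing $\sigma\,dB_{j,0}$ is the grid-sampled $Q$-Wiener process up to~$\Ord{h}$. The only difference is that you spell out the deterministic Taylor estimates and the dominated-convergence bound for the second difference of the noise, which the paper defers to Roberts et al.\ or leaves implicit.
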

\begin{proof}
Following the exact same discussion by  Roberts et al.~\cite{Rob09} for deterministic systems, we just consider the stochastic terms and $\hat{\alpha}_j$\,.

By the definition of~$\hat{\alpha}_j$, since $q_{jl}^h=\Ord{h}$ and
noticing that $\lambda_k=k^2\pi^2/h^2$\,, then we have
\begin{equation*}
\hat{\alpha}_j=\alpha+\mathcal{O}(h^2), \quad h\rightarrow 0\,.
\end{equation*}
For the stochastic terms, notice that  $\tilde{\eta}_k$~converges to
zero in mean square with speed
$\exp\{-\lambda_k\}=\exp\{-k^2\pi^2/h^2\}$ as $h\rightarrow 0$\,.
Then since $q_{j,l}^h=\Ord{h}$\,, we have $Q_j= \Ord{h^2}$   as
$h\rightarrow 0$\,. Then by the definition  of $W^\gamma(x,t)$ and
analysis~\eqref{e:beta1}--\eqref{e:beta2} the macroscopic discrete
model~\eqref{e:Uj-model} is consistent up error~$\Ord{h}$ to the
following stochastic reaction-diffusion equation
\begin{eqnarray*}
\p_tu(x,t)&=&\p_{xx}u(x,t)+\alpha (u(x,t)-u^3(x,t))+\sigma B(x,t),\quad x\in[0, L]\\
u(0,t)&=&u(L,t)\quad t\geq 0
\end{eqnarray*}
 where $B(x,t)$ distributes the same as $W(x,t)$\,. This completes the proof.
\end{proof}

\section{Conclusion}
Stochastic averaging is an effective method to extract
macroscopic dynamics from \textsc{spde}s with separated time
scale~\cite{WR08,WR08-1}. Here by applying  the stochastic averaging
and dividing the spatial domain into overlapping finite sized
elements with special interelement coupling boundary
conditions~(\ref{cbd1})--(\ref{cbd2}), we derive a macroscopic
discrete model (\ref{e:Uj-model}) for stochastic reaction-diffusion
partial differential equations~(\ref{e:sRDe}) with periodic boundary conditions. The most
important property  of such interelement coupling boundary
conditions is preserving the self-adjoint symmetry which is often so
important in application~\cite{Rob09}. Furthermore, by the choice  of
stochastic forcing on each element, this coupling boundary conditions
also assures the consistency for vanishing element size, section~\ref{sec:consistency}.

Moreover, the final discrete model (\ref{e:Uj-model}), which is
different from the usual finite difference approximation
model~(\ref{e:diff-appr}),  shows the importance of the subgrid scale interaction
between noise and spatial diffusion and provides a new rigorous
approach to constructing semi-discrete approximations to stochastic
reaction-diffusion \spde{}s.

\paragraph{Acknowledgements} This research is supported by the Australian Research Council grant DP0774311 and NSFC grant 10701072.



\end{document}